\documentclass[a4paper,notitlepage,twoside,reqno,10pt]{amsart}

\usepackage{bbm,pifont,latexsym}

\usepackage{dcolumn,indentfirst}
\usepackage[bookmarksopen=true,linktocpage=true,pdfstartview={XYZ null null 1.5}]{hyperref}
\usepackage{amsmath,amssymb,amscd,amsthm,amsfonts,mathrsfs,yhmath}
\usepackage{color,graphicx,xcolor,graphics,subfigure,extarrows,caption2}
\usepackage{titletoc}

\newtheorem{thm}{Theorem}[section]
\newtheorem{lema}[thm]{Lemma}

%²»ÁªºÏ±àºÅ

\newtheorem{thmx}{Theorem}
 % "letter-numbered" theorems

\theoremstyle{definition}
\newtheorem*{defi}{Definition}

\theoremstyle{remark}

\newtheorem*{ques}{\hskip 1em Question}

\newcommand{\D}{\mathbb{D}}

\newcommand{\R}{\mathbb{R}}
\newcommand{\Z}{\mathbb{Z}}
\newcommand{\N}{\mathbb{N}}
\newcommand{\Q}{\mathbb{Q}}

\newcommand{\C}{\mathbb{C}}
\newcommand{\EC}{\widehat{\mathbb{C}}}

\newcommand{\MM}{\mathcal{M}}

\newcommand{\MH}{\mathcal{H}}

\newcommand{\MS}{\mathcal{S}}

\newcommand{\Crit}{\textup{Crit}}

\newcommand{\diam}{\textup{diam}}

\newcommand{\ii}{\textup{i}}

\newcommand{\Area}{\textup{Area}}
\newcommand{\HT}{\textup{HT}}

%----------------------------------------------------------------------------------------------------------------
\begin{document}

\author{Yuming Fu}
\address{Department of Mathematics, Nanjing University, Nanjing 210093, P. R. China}
\email{476876690@qq.com}

\author{FEI YANG}
\address{Department of Mathematics, Nanjing University, Nanjing 210093, P. R. China}
\email{yangfei@nju.edu.cn}

%---------------------------------------------------------------------------------------------------------------
\title[Area and Hausdorff dimension of carpet Julia sets]{Area and Hausdorff dimension of Sierpi\'{n}ski carpet Julia sets}

\begin{abstract}
We prove the existence of rational maps whose Julia sets are Sierpi\'{n}ski carpets having positive area. Such rational maps can be constructed such that they either contain a Cremer fixed point, a Siegel disk or are infinitely renormalizable. We also construct some Sierpi\'{n}ski carpet Julia sets with zero area but with Hausdorff dimension two. Moreover, for any given number $s\in(1,2)$, we prove the existence of Sierpi\'{n}ski carpet Julia sets having Hausdorff dimension exactly $s$.
\end{abstract}

% AMS subject classifications (used in AMS journals)
\subjclass[2010]{Primary: 37F45; Secondary: 37F10, 37F25}

% AMS keywords (used in AMS journals)
\keywords{Sierpi\'{n}ski carpet; Julia set; positive area; Hausdorff dimension}

% today's date, or fill in whatever date you prefer
\date{\today}

% acknowledge support, etc
% \thanks{This research was partially supported by NSF grant DOA-123456789.}
% \thanks{We would like to thank our colleagues for their helpful criticism.}

% dedication
% \dedicatory{Dedicated to Professor Donald Knuth on the occasion of his $100$th birthday}

\maketitle

%----------------------------------------------------------------------------------------------------------------
%\vskip1.0cm
%\tableofcontents
%----------------------------------------------------------------------------------------------------------------
\section{Introduction}\label{introduction}

According to \cite{Wh58}, a subset $S$ in the Riemann sphere $\EC$ is called a \emph{Sierpi\'{n}ski carpet} (\emph{carpet} in short) if $S$ is compact, connected, locally connected, has empty interior and has the property that the complementary domains are bounded by pairwise disjoint simple closed curves. All Sierpi\'{n}ski carpets are homeomorphic. In particular, they are all homeomorphic to the standard ``middle ninths" square Sierpi\'{n}ski carpet, which is the well-known self-similar set shown in Figure \ref{Fig:carpet}.

\begin{figure}[!htpb]
  \setlength{\unitlength}{1mm}
  \centering
  \includegraphics[width=0.45\textwidth]{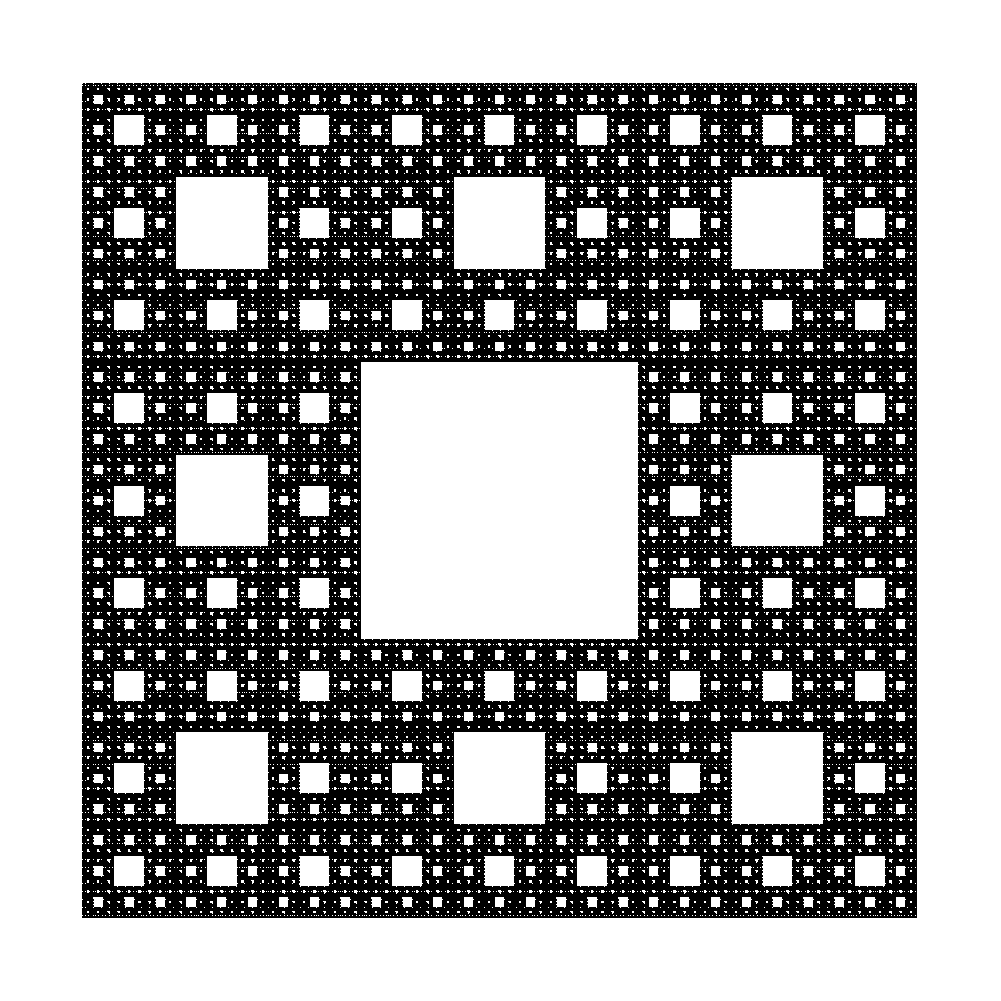}~~
  \includegraphics[width=0.45\textwidth]{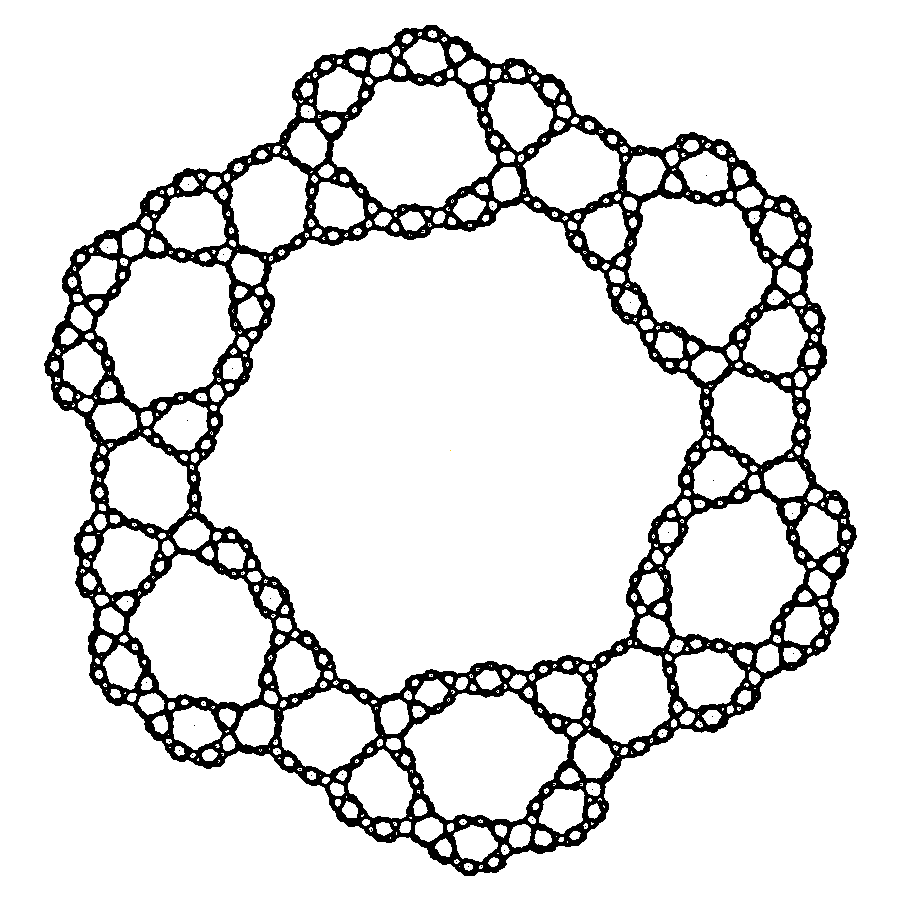}
  \caption{Left: The middle ninths square Sierpi\'{n}ski carpet. Right: Sierpi\'{n}ski carpet Julia set of a hyperbolic McMullen map. They are homeomorphic to each other and both have Hausdorff dimension strictly less than two.}
  \label{Fig:carpet}
\end{figure}

Since all Sierpi\'{n}ski carpets are the same in the sense of topological equivalence, many people are interested on the geometric classifications of Sierpi\'{n}ski carpets. In particular, the study of quasisymmetric equivalences between general carpets and round carpets (i.e. the boundaries of complement components of carpets are round circles) is partially motivated by the Kapovich-Kleiner conjecture in geometric group theory. The quasisymmetric rigidity of the square Sierpi\'{n}ski carpets was also studied extensively recently. One may refer to \cite{Bo11}, \cite{BM13}, \cite{Me14}, \cite{ZS15} and the references therein.

As chaotic sets, the Sierpi\'{n}ski carpets also appear as Julia sets of rational maps and they play an important role in complex dynamics \cite{Mc94a}.
The first example of Sierpi\'{n}ski carpet Julia sets was found by Milnor and Tan Lei \cite[Appendix F]{Mi93}. They constructed such Julia sets in quadratic rational maps having two super-attracting cycles with periods $3$ and $4$. Many Sierpi\'{n}ski carpet Julia sets was found later in quadratic rational maps containing super-attracting cycles with various periods \cite{DFGJ14}. Another important family, the McMullen maps
\begin{equation}\label{equ:McM-1}
F_\lambda(z)=z^n+\frac{\lambda}{z^m}, \text{\quad where } n\geq 2,\,m\geq 1 \text{ and } \lambda\in\C\setminus\{0\},
\end{equation}
can produce a lot of ``different" Sierpi\'{n}ski carpet Julia sets (see \cite{DLU05}, \cite{DP09} and Figure \ref{Fig:carpet}). One may also refer to \cite[\S 5.6]{Pi94}, \cite{Mo00}, \cite{St08}, \cite{Lo10}, \cite{XQY14} and \cite{Ya18} for some other rational maps having Sierpi\'{n}ski carpet Julia sets.

The quasisymmetric equivalence and rigidity on Sierpi\'{n}ski carpet Julia sets were studied initially in \cite{BLM16} for critically finite rational maps, and then some corresponding results were generalized to the critically infinite case recently \cite{QYZ19}. Moreover, some topological characterizations of Sierpi\'{n}ski carpet Julia sets of critically finite rational maps are given in \cite{GZZ17} and \cite{GHMZ18}.

\vskip0.1cm
To study the quasisymmetric rigidity of the Sierpi\'{n}ski carpet Julia sets, it is important to know if these Julia sets have zero area. Except for an infinitely renormalizable example given in \cite{QYY18}, to the best of our knowledge, all the known Sierpi\'{n}ski carpet Julia sets are hyperbolic, parabolic or semi-hyperbolic and hence they all have Hausdorff dimension strictly less than two. In this paper we prove the following result.

\begin{thmx}\label{thm-area}
There exist rational maps that have Sierpi\'{n}ski carpet Julia sets with positive area. Moreover, such rational maps can be constructed so that they either contain a Cremer fixed point, a Siegel disk or are infinitely renormalizable.
\end{thmx}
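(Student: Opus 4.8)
The plan is to adapt the Buff--Ch\'eritat method for producing quadratic polynomials with positive-area Julia sets to a family of rational maps whose Julia sets are Sierpi\'nski carpets, the essential new task being to guarantee that the carpet structure survives the whole construction. Accordingly, I would first fix a family $\{f_\theta\}$ of rational maps (parametrized by a rotation number $\theta$, with any auxiliary parameters chosen once and for all) having the following features: $f_\theta$ has a superattracting fixed point at $\infty$ of local degree $\ge 2$, so that its immediate basin $B_\infty$ is a Jordan domain even when $J(f_\theta)$ is geometrically wild --- this is precisely why one must leave the quadratic polynomial family, since for $e^{2\pi\ii\theta}z+z^2$ the basin of $\infty$ is Jordan only when $J$ is a circle; $f_\theta$ has a fixed point at $0$ of multiplier $e^{2\pi\ii\theta}$ whose local form lies in the Inou--Shishikura class, so that near-parabolic renormalization applies and, for $\theta$ of suitable arithmetic type, $0$ is a Siegel point with Siegel disk $\Delta_\theta$ a Jordan domain; and the free critical orbit is captured by a preimage of $B_\infty$, so that the Fatou set of $f_\theta$ consists exactly of the grand orbits of $B_\infty$ and of $\Delta_\theta$.

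Next I would show that, for the relevant parameters, $J(f_\theta)$ is a Sierpi\'nski carpet. Connectedness and local connectedness follow from tameness of the postcritical set; emptiness of the interior is automatic, as a nowhere dense Julia set has empty interior irrespective of its area; and the crucial point --- that every Fatou component is a Jordan domain with pairwise disjoint closures --- reduces to showing that $\partial B_\infty$ and $\partial\Delta_\theta$ are Jordan curves meeting no other Fatou closure. The Jordan property would come from known regularity results for Siegel disk boundaries in the Inou--Shishikura class, and disjointness of closures from expansion of $f_\theta$ away from $\overline{\Delta_\theta}$, which keeps the grand-orbit components uniformly separated. A structural observation I would isolate here, and use again below, is that the combinatorial adjacency pattern of the Fatou components is encoded by the (controlled) itinerary of the postcritical set, hence is stable under small perturbations of $\theta$.

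With the carpet property established, I would run the Buff--Ch\'eritat limiting machine: choose a sequence of rotation numbers $\theta_n\to\theta_\infty$ along which (i) near-parabolic renormalization produces, near $\partial\Delta_{\theta_n}$, an $f_{\theta_n}$-invariant subset of $J(f_{\theta_n})$ whose Lebesgue density is bounded below by a universal positive constant, so that $\Area(J(f_{\theta_n})\cap X)\ge\delta>0$ on a fixed compact set $X$; (ii) the grand orbit of $\Delta_{\theta_n}$ has area tending to $0$ (in the Siegel case, eventually made smaller than $\delta$); and (iii) $\theta_\infty$ is, according to the conclusion sought, a Cremer number, a Siegel number, or a rotation number making $f_{\theta_\infty}$ infinitely renormalizable. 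The area of the complement of the basin of $\infty$ is upper semicontinuous in $\theta$ because the basin itself varies lower semicontinuously (Carath\'eodory kernel convergence), so (i) and (ii) force $\Area(J(f_{\theta_\infty}))\ge\delta>0$; by the stability noted above $J(f_{\theta_\infty})$ is still a Sierpi\'nski carpet, and the three cases of the theorem correspond to the three choices of $\theta_\infty$.

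The main obstacle I expect is reconciling the two halves of the argument: positive area is produced by a ``fattening'' that is intrinsically wild, whereas the Sierpi\'nski carpet conclusion is a strong tameness condition. The delicate point is to verify that the fattening region created near $\partial\Delta_{\theta_n}$ --- a cascade of ever-smaller satellite Siegel disks --- neither creates interior in $J$ nor makes two Fatou closures touch, both before and in the limit, and that the Jordan-curve property of the relevant Fatou boundaries survives the passage $\theta_n\to\theta_\infty$. The Cremer and the infinitely renormalizable cases, in which $\overline{\Delta_{\theta_\infty}}$ degenerates or is replaced by a renormalization limit, will require the most care.
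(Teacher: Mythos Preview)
Your plan takes a genuinely different route from the paper, and it is considerably harder than necessary. The paper does \emph{not} rerun the Buff--Ch\'eritat machine inside a new family. Instead it works with the McMullen maps $f_\lambda(z)=z^n+\lambda/z^n$, uses the known fact (Lemma~\ref{lem:copy}) that the non-escape locus contains a homeomorphic copy $\MM$ of the Mandelbrot set for which each $\lambda\in\MM$ makes $f_\lambda$ quadratic-like renormalizable with straightening $Q_c$, and then simply \emph{chooses} $c$ to be one of the Buff--Ch\'eritat (or Avila--Lyubich) parameters from Theorem~\ref{thm:positive-area}. Since hybrid equivalence is quasiconformal with $\bar\partial$ vanishing on the small filled Julia set, positive area of $J(Q_c)$ transfers directly to the small Julia set of $f_\lambda$, hence to $J(f_\lambda)$. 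The carpet conditions are then verified for this single fixed $\lambda$ using off-the-shelf tools: Theorem~\ref{thm:QWY} gives that $\partial B_\lambda$ is Jordan; Theorem~\ref{thm:hairy-cycle} controls the postcritical set and (in the Siegel case) the boundary of $\Delta$; the Shrinking Lemma plus Whyburn's criterion give local connectivity; and Lemma~\ref{lem:copy}(b) ensures the small filled Julia set touches $\partial B_\lambda$ and $\partial T_\lambda$ only at the $\beta$-fixed point and its preimage, which yields disjointness of Fatou boundaries. No limiting argument, no stability of carpet structure under perturbation, no semicontinuity of area is needed.

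Your plan, by contrast, has real obstacles that the paper's approach sidesteps entirely. First, you require a family in which the indifferent germ lies in the Inou--Shishikura class \emph{and} a separate critical orbit is captured by $B_\infty$; but the Buff--Ch\'eritat construction needs a critical point accumulating on $\partial\Delta_\theta$, so ``the free critical orbit is captured by a preimage of $B_\infty$'' cannot be the whole story --- you must exhibit such a family and check the IS hypotheses, which is nontrivial. Second, the ``stability'' claim you flag is exactly where the approach is most fragile: in the Cremer limit the Siegel disk collapses, so the Fatou combinatorics genuinely change and there is no evident reason the carpet property persists. The paper avoids all of this by treating the positive-area quadratic Julia set as a black box and importing it via the Straightening Theorem; I recommend you adopt that strategy.
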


The existence of quadratic polynomials that have Julia sets with positive area is known (see \cite{BC12}, \cite{AL15}). Such quadratic polynomials either contain a Cremer fixed point, a Siegel disk or are infinitely renormalizable. We will use polynomial-like mapping and renormalization theory to ``arrange" these quadratic Julia sets in the Julia sets of rational maps such that they are Sierpi\'{n}ski carpets. We would like to mention that these are the first examples of Sierpi\'{n}ski carpet Julia sets containing the Cremer points and the boundaries of Siegel disks.

\vskip0.1cm
Using the same idea (polynomial-like mapping and renormalization theory) we also prove the following result.

\begin{thmx}\label{thm-area-0-dim-2}
There exist rational maps that have Sierpi\'{n}ski carpet Julia sets with zero area and Hausdorff dimension two.
\end{thmx}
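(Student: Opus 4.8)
The plan is to follow exactly the strategy announced right after the statement of Theorem~\ref{thm-area}: combine the existence of quadratic polynomials with Julia sets of zero area but Hausdorff dimension two with a surgery that realizes such a quadratic Julia set inside the Julia set of a rational map as a Sierpi\'{n}ski carpet. There do exist quadratic polynomials $P(z)=e^{2\pi i\theta}z+z^2$ (for a suitably chosen Liouville-type $\theta$, via the Cremer-point constructions behind \cite{BC12}, \cite{AL15}) whose Julia sets have zero Lebesgue measure and yet Hausdorff dimension two; alternatively one may use an infinitely renormalizable quadratic polynomial with the same two properties. So the first step is to fix once and for all such a quadratic polynomial $P_0$, together with a polynomial-like restriction of one of its iterates on a suitable topological disk.

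Next I would carry out the renormalization surgery. The idea is to pick a rational map $R$ (for instance within a family of McMullen-type maps or a family of quadratic rational maps with two attracting cycles, as in \cite{Mi93}, \cite{DFGJ14}) for which one already knows the Julia set is a Sierpi\'{n}ski carpet, and which is \emph{renormalizable}: some iterate $R^{k}$ restricted to a suitable puzzle piece $U$ is a polynomial-like map of degree two. One then uses the Douady--Hubbard straightening theorem to move, inside the parameter of $R$, until the straightening of this polynomial-like map is hybrid equivalent to our chosen $P_0$. The key point is that hybrid conjugacy is conformal on the filled Julia set, hence preserves both zero area and Hausdorff dimension two; therefore the little copy of $J(P_0)$ sitting inside $J(R)$ has zero area and Hausdorff dimension two. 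Since Hausdorff dimension is monotone under inclusion, $\dim_H J(R)\ge \dim_H J(P_0)=2$, so $\dim_H J(R)=2$. For the area statement one must check that the remaining part of $J(R)$ — the grand orbit of the little Julia set under $R$, together with the ``expanding'' part of $J(R)$ — still has zero area: the expanding part has Hausdorff dimension strictly less than two (by the usual pressure/thermodynamic argument for hyperbolic or semi-hyperbolic pieces away from the renormalization locus), hence zero area, and the countable union of preimages of a null set is null.

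Finally I would verify that the Julia set so produced is genuinely a Sierpi\'{n}ski carpet, i.e. that the surgery does not destroy the carpet structure. This is the step I expect to be the main obstacle. One has to show: (i) $J(R)$ remains connected, locally connected, with empty interior; (ii) the Fatou components are Jordan domains with pairwise disjoint closures; and (iii) no two complementary components touch. Local connectivity and the Jordan-domain property are delicate precisely because $J(P_0)$ is \emph{not} locally connected in the Cremer case, or only subtly so in the infinitely renormalizable case; here one exploits that the little Julia set $J(P_0)$ is contained in the interior of a puzzle piece and that the Fatou components meeting that piece are the bounded Fatou components of the polynomial-like map, whose boundaries are controlled. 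Concretely, I would set this up so that the dynamically relevant Fatou components are the immediate basins of the two attracting cycles of $R$ and their preimages, which are Jordan domains with disjoint closures by a holomorphic-motion / John-domain argument, together with the (possibly wild) Fatou components of $P_0$, whose closures are pushed into disjoint puzzle pieces by the combinatorics. The non-touching condition is then a consequence of the expansion of $R$ along the carpet together with the fact that distinct bounded Fatou components of the polynomial-like part land in distinct puzzle pieces. Once these carpet axioms are checked, the theorem follows. I expect the technical heart to be the simultaneous control of the boundaries of \emph{all} Fatou components after the surgery, and I would handle it by choosing the ambient rational family so that, outside the renormalization window, the map is uniformly expanding and all the relevant estimates are quasiconformally stable.
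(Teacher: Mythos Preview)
Your overall architecture---realise a quadratic with zero area and full dimension as the small Julia set of a renormalizable McMullen map, then verify the carpet axioms---matches the paper. But two points diverge from the paper's proof, and one of them is a genuine gap.

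\medskip

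\textbf{Choice of the quadratic model.} You propose a Cremer or infinitely renormalizable quadratic with zero area and Hausdorff dimension two, citing \cite{BC12} and \cite{AL15}. Those references produce \emph{positive} area, not zero area; you have not pointed to a result giving simultaneously zero area and dimension two in the Cremer or infinitely renormalizable setting. The paper avoids this by using Theorem~\ref{thm-non-renorm} (Lyubich--Shishikura): there exist \emph{non-renormalizable} quadratics $Q_{c}$ with $\dim_H J(Q_c)=2$ and $\Area(J(Q_c))=0$. Because the post-critical set of such a $Q_c$ may coincide with its entire Julia set, the paper passes to a $2$-renormalization (so the small filled Julia set sits strictly inside the $1$-renormalization window) in order to keep $P(f_\lambda)$ away from $\partial T_\lambda$ and the $\beta$-fixed point. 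Your plan does not address this issue.

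\medskip

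\textbf{The zero-area step.} This is where your argument breaks. You write that outside the grand orbit of the small Julia set ``the expanding part has Hausdorff dimension strictly less than two (by the usual pressure/thermodynamic argument \ldots), hence zero area.'' That set is not a compact forward-invariant hyperbolic set: it accumulates on the small Julia set, so no uniform expansion or Bowen-type bound applies, and there is no reason its Hausdorff dimension should be strictly below two. The paper's argument is different and does not go through dimension at all. It invokes Lemma~\ref{Mc} (McMullen): for almost every $z\in J(f_\lambda)$ one has $d(f_\lambda^{\circ k}(z),P(f_\lambda))\to 0$. Since $P(f_\lambda)\setminus\{\infty\}$ is contained in the two symmetric small Julia sets $\widetilde J\cup(-\widetilde J)\subset U\cup(-U)$, any $z$ whose orbit never lands in $\widetilde J\cup(-\widetilde J)$ but does enter $U\cup(-U)$ must later exit through $V\setminus U$ (by the polynomial-like structure), so its orbit stays a definite distance from $P(f_\lambda)$ infinitely often; hence such $z$ lie in the McMullen null set. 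Together with $\Area(\widetilde J)=0$ (and absolute continuity of preimages), this gives $\Area(J(f_\lambda))=0$. You should replace the ``pressure'' sentence by this ergodic argument.

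\medskip

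The carpet verification you sketch is essentially correct and, once you use the non-renormalizable model, actually simpler than you anticipate: there are no bounded Fatou components inside the small filled Julia set, so every Fatou component is a preimage of $B_\lambda$, and the needed Jordan-curve and disjointness properties follow from Theorem~\ref{thm:QWY}, Lemma~\ref{lem:copy}(b), and the Shrinking Lemma, exactly as in the proof of Theorem~\ref{thm-area}.
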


It was known from Shishikura and Lyubich in 1990s that there exist quadratic polynomials that have Julia sets with full Hausdorff dimension but with zero area. One of the crucial points in the proof of Theorem \ref{thm-area-0-dim-2} is to show that the whole Julia sets of the rational maps have zero area. The rational maps constructed in Theorems \ref{thm-area} and \ref{thm-area-0-dim-2} are both McMullen maps defined in \eqref{equ:McM-1}.

\begin{thmx}\label{thm-dim}
For each $s\in(1,2)$, there exist rational maps that have Sierpi\'{n}ski carpet Julia sets with Hausdorff dimension exactly $s$.
\end{thmx}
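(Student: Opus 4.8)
The plan is to realize every prescribed dimension $s\in(1,2)$ inside the family of McMullen maps $F_\lambda(z)=z^n+\lambda/z^m$, exploiting the fact (already used in Theorems \ref{thm-area} and \ref{thm-area-0-dim-2}) that for suitable parameters the Julia set is a Sierpi\'{n}ski carpet which carries, via renormalization, a copy of a quadratic Julia set $J(Q_c)$ with $Q_c(w)=w^2+c$. The key point is that the Hausdorff dimension of $J(F_\lambda)$ can be read off from, or at least controlled by, the Hausdorff dimension of the embedded quadratic Julia set together with a hyperbolic "background'' piece. First I would recall the known range of $\dim_H J(Q_c)$ for $c$ in the main cardioid and its vicinity: as $c\to 0$ one has $\dim_H J(Q_c)\to 1$ (Ruelle), and by taking $c$ approaching suitable parabolic or Misiurewicz parameters, or simply running over the boundary of the main cardioid, $\dim_H J(Q_c)$ takes all values in a neighbourhood of $1$; combined with Shishikura's result that $\dim_H J(Q_c)$ attains values arbitrarily close to $2$, continuity of $c\mapsto\dim_H J(Q_c)$ on hyperbolic components then gives, via the intermediate value theorem, that $\{\dim_H J(Q_c): Q_c \text{ hyperbolic}\}$ is dense in $(1,2)$, and in fact one obtains every value in $(1,2)$ by passing to appropriate limits or by using the semi-hyperbolic parameters for which $\dim_H$ is still continuous.

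Second, I would set up the renormalization/polynomial-like structure: for $\lambda$ in the relevant region of parameter space there is a polynomial-like restriction of an iterate $F_\lambda^k$ on a topological disk $U_\lambda$, hybrid equivalent to $Q_{c(\lambda)}$, and the dependence $\lambda\mapsto c(\lambda)$ is continuous (indeed holomorphic on the appropriate piece) and surjective onto a neighbourhood of the relevant part of the Mandelbrot set — this is the "arranging'' mechanism already invoked in the proofs of Theorems \ref{thm-area} and \ref{thm-area-0-dim-2}, and I would reuse it verbatim. The little Julia set $J_\lambda^{\mathrm{loc}}\subset J(F_\lambda)$ is then quasiconformally (or at least bi-Hölder, which suffices for dimension if $c$ is hyperbolic or semi-hyperbolic) equivalent to $J(Q_{c(\lambda)})$, so $\dim_H J_\lambda^{\mathrm{loc}}=\dim_H J(Q_{c(\lambda)})$.

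Third, I must upgrade the local equality to a global one: $\dim_H J(F_\lambda)=\max\{\dim_H J_\lambda^{\mathrm{loc}},\, d_\lambda\}$, where $d_\lambda$ is the dimension of the "hyperbolic part'' of $J(F_\lambda)$ away from the small Julia sets and their grand orbits. Because $J(F_\lambda)=\overline{\bigcup_{j\ge 0} F_\lambda^{-j}(J_\lambda^{\mathrm{loc}})}$ up to the grand orbit of the hyperbolic set, and preimages under the conformal map $F_\lambda$ do not change Hausdorff dimension, the grand orbit of $J_\lambda^{\mathrm{loc}}$ has dimension $\dim_H J(Q_{c(\lambda)})$; the remaining part is a hyperbolic (expanding) Cantor-like set whose dimension $d_\lambda$ is strictly less than $2$ and depends continuously on $\lambda$. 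Choosing $\lambda$ so that $\dim_H J(Q_{c(\lambda)})=s$ and checking $d_\lambda<s$ — which holds automatically once $s$ is above the (uniformly bounded away from $2$) dimension of the background hyperbolic set, and the low range $s$ near $1$ is handled by making the hyperbolic background itself have small dimension — gives $\dim_H J(F_\lambda)=s$. Throughout, $\lambda$ is kept in the escape/carpet locus so that $J(F_\lambda)$ is a Sierpi\'{n}ski carpet by the criteria already established.

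The main obstacle I anticipate is the global dimension formula: controlling the dimension of $J(F_\lambda)\setminus(\text{grand orbit of }J_\lambda^{\mathrm{loc}})$ and showing it stays below the target $s$, uniformly enough that the same $\lambda$ that tunes $\dim_H J(Q_{c(\lambda)})=s$ also has strictly smaller background dimension. A clean way around this is to realize $s$ first as $\dim_H J(Q_c)$ with $Q_c$ \emph{hyperbolic or semi-hyperbolic} (so the little Julia set is a genuine expanding/semi-hyperbolic set), arrange the renormalization so that the complementary hyperbolic dynamics of $F_\lambda$ is \emph{uniformly} expanding with a bound independent of the tuning, and then invoke Bowen-type formulas together with the stability of Hausdorff dimension for hyperbolic and semi-hyperbolic rational maps to conclude $\dim_H J(F_\lambda)=\max\{s, d_\lambda\}=s$ whenever $d_\lambda<s$; the finitely many values of $s\le d_\lambda$ near $1$ are then covered by a separate construction lowering $d_\lambda$, or by a McMullen-type slit argument directly producing carpet Julia sets of small dimension.
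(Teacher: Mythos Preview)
Your approach has a genuine gap at its core: the ``global dimension formula'' $\dim_H J(F_\lambda)=\max\{\dim_H J_\lambda^{\mathrm{loc}}, d_\lambda\}$ is neither proved nor evidently true. The Julia set $J(F_\lambda)$ is the \emph{closure} of the grand orbit of the small Julia set together with the escaping-component boundaries, and taking closures can raise Hausdorff dimension; there is no a priori reason the ``background'' dimension $d_\lambda$ should stay below your target $s$, especially for $s$ close to $1$. You acknowledge this as the main obstacle, but the proposed workarounds (a ``separate construction lowering $d_\lambda$'' or a ``McMullen-type slit argument'') are precisely where all the content would have to be, and they are not carried out. There is also a confusion about the parameter locus: if $\lambda$ lies in the renormalizable copy $\mathcal M$, the free critical orbit is bounded and $\lambda$ is \emph{not} in the escape/Sierpi\'nski-hole locus of Theorem~\ref{thm:Escape-3}(c); the carpet property must then be obtained by the separate arguments of Theorems~\ref{thm-area} and~\ref{thm-area-0-dim-2}, not ``by the criteria already established.''

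The paper avoids all of this with a much shorter, purely hyperbolic argument that never touches renormalization or any decomposition of the Julia set. It works inside the Sierpi\'nski holes (escape locus), where every Julia set is automatically a carpet, and combines three ingredients: (i) by \cite{BW15} there exist Sierpi\'nski-hole parameters $\lambda_n$ with $\dim_H J(f_{\lambda_n})\le 1+10/\log n$, so the available dimensions approach $1$ as $n\to\infty$; (ii) by \cite{QY18} (built on Shishikura's parabolic bifurcation), in the full degree-$2n$ hyperbolic component $\widehat{\mathcal S}_n$ containing that Sierpi\'nski hole one has $\sup_{f\in\widehat{\mathcal S}_n}\dim_H J(f)=2$; (iii) by Ruelle \cite{Ru82}, $f\mapsto\dim_H J(f)$ is real-analytic on $\widehat{\mathcal S}_n$. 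Since every $f\in\widehat{\mathcal S}_n$ still has a carpet Julia set, the intermediate value theorem yields every $s\in[1+10/\log n,\,2)$ inside $\widehat{\mathcal S}_n$, and letting $n\to\infty$ covers all of $(1,2)$.
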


Ha\"{i}ssinsky and Pilgrim proved that there are Sierpi\'{n}ski carpet Julia sets with conformal dimensions (and hence Hausdorff dimensions) arbitrarily close to two \cite{HP12}.
Note that their result and Theorem \ref{thm-dim} are not related to each other. It is natural to ask that if the result of Theorem \ref{thm-dim} is sharp, i.e.,
\begin{ques}
Do the Sierpi\'{n}ski carpet Julia sets always have Hausdorff dimension strictly larger than one?
\end{ques}

We will prove that if a rational map has an attracting or parabolic basin, then the corresponding Julia set has Hausdorff dimension strictly larger than one provided it is a Sierpi\'{n}ski carpet (see Theorem \ref{thm:att-para}). If the Sierpi\'{n}ski carpet Julia set only contains the boundaries of Siegel disks and their preimages, we can show that the answer to the above question is yes in some special cases (see \S\ref{subsec:dim-g-1}). However, we can prove the following

\begin{thmx}\label{thm:dim-1}
There exist Sierpi\'{n}ski carpets (not Julia sets) with Hausdorff dimension one.
\end{thmx}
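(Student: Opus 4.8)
The plan is to build a Sierpi\'{n}ski carpet by a controlled removal procedure, where the complementary ``holes'' are round disks whose radii shrink fast enough that the resulting residual set has Hausdorff dimension one. Concretely, I would start from a square (or the closed unit disk), and at each stage remove a countable family of small pairwise disjoint open round disks, compactly contained in the previously constructed region, in such a way that (i) the closure of the union of all removed disks is dense and their boundary circles are pairwise disjoint, (ii) what remains is compact, connected and locally connected, so that by the Whyburn characterization recalled in the introduction it is a carpet, and (iii) the total ``width'' left uncovered at scale $2^{-k}$ is summably small. A convenient way to organize (iii): after step $k$ the complement of the removed disks should be coverable by boxes of size $2^{-k}$ whose number grows subexponentially, e.g. like $2^{k}/k$ or like $C^{\sqrt k}$; then for every $s>1$ the $s$-dimensional Hausdorff content tends to $0$, giving $\dim_H \le 1$, while connectedness of a nondegenerate continuum forces $\dim_H \ge 1$, hence equality.

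The key steps, in order, are as follows. \textbf{Step 1.} Fix a nested sequence of finite ``grids'': let $\mathcal{G}_k$ be a cover of the square by essentially disjoint closed dyadic subsquares of side $2^{-k}$, and let $N_k$ be the number of these we decide to keep ``active''; choose the rule so that $N_k/2^k\to 0$ (for instance keep only $N_k = \lceil 2^k/(k+1)\rceil$ of them at each level, discarding the rest by placing a round disk inside every discarded square). \textbf{Step 2.} In each discarded square, inscribe an open round disk slightly smaller than the square, with radius chosen so all these disks, over all levels, are pairwise disjoint and have pairwise disjoint boundary circles; a small amount of care (shrink each disk by a factor $1-\varepsilon_k$ with $\sum \varepsilon_k<\infty$, and avoid the already-removed disks) makes this routine. \textbf{Step 3.} Let $S$ be the square minus the union of all these open disks. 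Then $S$ is compact with empty interior (every point is within $2^{-k}$ of a removed disk for all $k$), its complementary components are the removed round disks bounded by disjoint circles, and $S$ is connected and locally connected because at each level the ``web'' of surviving material between the holes stays connected — this requires choosing which squares to discard so as never to disconnect the remaining region (e.g. never discard two squares sharing an edge at the same level, and keep a connected ``skeleton'' of grid lines). By Whyburn's theorem $S$ is a Sierpi\'{n}ski carpet. \textbf{Step 4.} Dimension estimate: by construction $S$ is contained in the union of the $N_k$ surviving squares at level $k$, so $\mathcal{H}^s_{\,2^{-k}\sqrt2}(S) \le N_k \,(2^{-k}\sqrt2)^s \le C\, 2^{k(1-s)}/k \to 0$ for every $s>1$; thus $\dim_H S\le 1$. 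Since $S$ is a continuum containing more than one point, $\dim_H S \ge 1$. Hence $\dim_H S = 1$.

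The main obstacle I expect is \textbf{Step 3}, keeping the residual set simultaneously connected, locally connected, and with disjoint complementary Jordan curves while still throwing away almost all of each dyadic level. The tension is real: the dimension bound wants $N_k = o(2^k)$, i.e. the surviving squares must be a vanishingly small fraction, yet a very sparse set of surviving squares tends to fall apart into pieces. The fix is to let the surviving set at level $k$ be a ``thin connected net'': keep a connected union of grid segments of total length $\to 0$ relative to scale $2^k$ but still path-connected and locally connected (think of a sparse tree of corridors of width $2^{-k}$, with more and more corridors deleted as $k$ grows, always leaving a spanning subtree). One must check this net has the prescribed box count $N_k$, that its nested intersection $S$ is still locally connected (uniform local connectivity at each finite stage, passing to the limit as in standard Moore-theory / Whyburn arguments), and that the inscribed disks in the deleted region have genuinely disjoint closures — the last point is handled by the $1-\varepsilon_k$ shrinking in Step 2. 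Verifying local connectedness of the limit is the delicate routine part; everything else is a bookkeeping of the grid combinatorics.
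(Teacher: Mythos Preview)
There is a concrete error in your numerics that makes the construction collapse. You want $S$ to be covered at level $k$ by $N_k$ dyadic squares of side $2^{-k}$ with $N_k/2^k\to 0$ (both of your examples, $N_k\sim 2^k/k$ and $N_k\sim C^{\sqrt k}$, have this). But then $\mathcal H^1_{\sqrt2\,2^{-k}}(S)\le \sqrt 2\,N_k\, 2^{-k}\to 0$, so $\mathcal H^1(S)=0$, and a connected set with zero length is a single point. Equivalently, any connected union of $N_k$ grid squares of side $2^{-k}$ has Euclidean diameter of order at most $N_k\,2^{-k}\to 0$. Thus your ``thin connected net'' shrinks to a point and cannot be a carpet. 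The constraint you actually need for a nondegenerate continuum is $\liminf N_k\,2^{-k}>0$; combined with the dimension bound this forces $N_k\asymp 2^k$, and then producing a genuine carpet (infinitely many complementary Jordan disks with pairwise disjoint closures) inside such a nested family of corridors is the whole problem, not a bookkeeping detail. There is a second slip in Step~4: an inscribed round disk does not fill its square, so the four corner regions of every discarded square remain in $S$, and your claimed cover of $S$ by the $N_k$ active squares is false as stated.

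The paper avoids both issues with a \emph{variable} subdivision rather than a fixed dyadic one. At stage $m$ each surviving square is cut into a $k^m\times k^m$ grid (for a fixed $k\ge 3$) and the open central $(k^m-2)\times(k^m-2)$ square is removed, keeping exactly the outer frame of $4k^m-4$ small squares. Frames are connected and nest, so connectivity and local connectivity (via Lemma~\ref{loc}) are automatic; the holes are squares with pairwise disjoint boundaries, so Whyburn's criterion is immediate; and the residual set at stage $m$ is \emph{exactly} the union of $b_m=\prod_{i\le m}(4k^i-4)$ squares of side $l_m=\prod_{i\le m}k^{-i}$, with no leftover corners to account for. For any $\varepsilon>0$ one has $b_m l_m^{\,1+\varepsilon}\le 2\prod_{i\le m}4\,k^{-i\varepsilon}\to 0$, giving $\dim_H\le 1$, while the carpet structure already forces $\dim_H\ge 1$. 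The growing ratio $k^m\to\infty$ is precisely what lets the $(1+\varepsilon)$-content vanish while the frame stays macroscopically connected; a fixed dyadic scheme cannot reconcile these two demands as cleanly.
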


The examples in Theorem \ref{thm:dim-1} open the possibility of the existence of  Sierpi\'{n}ski carpet Julia sets with Hausdorff dimension one. However, from \S\ref{subsec:dim-1}, the Sierpi\'{n}ski carpets constructed in Theorem \ref{thm:dim-1} are far from self-similar. On the other hand, Julia sets of rational maps have self-similarity in general (even if they contain the boundaries of the Siegel disks). This observation indicates that the answer to the above question should be affirmative.

\vskip0.2cm
\textit{Acknowledgements.} The authors are very grateful to Huojun Ruan for providing a method to construct the Sierpi\'{n}ski carpets (not Julia sets) with Hausdorff dimension one (Theorem \ref{thm:dim-1}), to Xiaoguang Wang, Yongcheng Yin and Jinsong Zeng for offering a proof of Lemma \ref{lema:renor-attr}. We would also like to thank Arnaud Ch\'{e}ritat, Kevin Pilgrim, Feliks Przytycki, Weiyuan Qiu, Yongcheng Yin and Anna Zdunik for helpful discussions and comments. This work is supported by National Natural Science Foundation of China.

\section{Definitions and basic settings}\label{sec:preliminary}

In this section we first give some necessary definitions in polynomial-like renormalization theory, and then state some useful results on the dynamics of McMullen maps and  quadratic polynomials.

\subsection{Polynomial-like mappings and renormalization}

Let $U$ and $V$ be two Jordan disks in $\C$ such that $U$ is compactly contained in $V$. The triple $(f,U,V)$ is called a \textit{polynomial-like mapping} of degree $d\geq 2$ if $f:U\to V$ is a proper holomorphic surjection with degree $d$. The \emph{filled Julia set} $K(f)$ is defined by $K(f)=\bigcap_{n\in\N}f^{-n}(V)$ and the \textit{Julia set} $J(f)$ is the topological boundary of $K(f)$.

Two polynomial-like mappings $(f_1,U_1,V_1)$ and $(f_2,U_2,V_2)$ of degree $d$ are said to be \emph{hybrid equivalent} if there exists a quasiconformal homeomorphism $h$ defined from a neighborhood of $K(f_1)$ onto a neighborhood of $K(f_2)$, which conjugates $f_1$ to $f_2$ and the complex dilatation of $h$ on $K(f_1)$ is zero. The following result is fundamental in the renormalization theory, which is due to Douady and Hubbard.

\begin{thm}[{The Straightening Theorem, \cite[p.\,296]{DH85}}]\label{straightening}
Let $(f,U,V)$ be a polynomial-like mapping of degree $d\geq 2$. Then
\begin{enumerate}
\item $(f,U,V)$ is hybrid equivalent to a polynomial $P$ with the same degree $d$;
\item If $K(f)$ is connected, then $P$ is uniquely determined up to a conjugation by an affine map.
\end{enumerate}
\end{thm}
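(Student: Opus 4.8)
The plan is to prove part (1) by quasiconformal surgery --- gluing the model map $z\mapsto z^{d}$ near $\infty$ onto $f$ --- and to deduce part (2) from the classification of hybrid equivalences between polynomials with connected filled Julia set via B\"{o}ttcher coordinates.

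For part (1), I would first arrange (by shrinking $V$ to a Jordan disk with real-analytic boundary disjoint from the finitely many critical values of $f$ and replacing $U$ by its $f$-preimage) that $\partial U$ and $\partial V$ are real-analytic Jordan curves and $f$ extends smoothly to $\overline U$; this changes neither $K(f)$ nor the hybrid class. Fix $\rho>1$. Since $f|_{\partial U}\colon\partial U\to\partial V$ and $w\mapsto w^{d}$ from $\{|w|=\rho\}$ onto $\{|w|=\rho^{d}\}$ are both smooth degree-$d$ coverings of the circle, one can choose a diffeomorphism $\Phi\colon\EC\setminus U\to\EC\setminus\{|w|<\rho\}$ fixing $\infty$, equal to $z\mapsto cz$ near $\infty$, with $\Phi(\partial U)=\{|w|=\rho\}$, $\Phi(\partial V)=\{|w|=\rho^{d}\}$ and $\Phi(f(z))=\Phi(z)^{d}$ on $\partial U$. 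Then
\[
F(z)=\begin{cases} f(z), & z\in\overline U,\\ \Phi^{-1}\bigl(\Phi(z)^{d}\bigr), & z\in\EC\setminus U, \end{cases}
\]
is a well-defined degree-$d$ branched covering of $\EC$ (the two formulas agree on $\partial U$ by the intertwining condition) that agrees with $f$ on a neighborhood of the filled Julia set $K(f)$, fixes $\infty$ as a totally ramified point, and on $\EC\setminus U$ is the $\Phi$-conjugate of the holomorphic map $w\mapsto w^{d}$.

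Next I would put on $\EC$ the $F$-invariant conformal structure $\sigma$ equal to $\Phi^{*}$ of the standard structure on $\EC\setminus U$, equal to the standard structure on $K(f)$, and spread over $U\setminus K(f)$ by the iterates of $f$ (each point there has a first iterate in $\EC\setminus U$, reached along a holomorphic piece of orbit). Its $F$-invariance is immediate. \emph{The crucial point}, and the main obstacle, is that the Beltrami coefficient $\mu$ of $\sigma$ has $\|\mu\|_{\infty}<1$: on $K(f)$ it is zero; on $\EC\setminus U$ it is the Beltrami coefficient of the single diffeomorphism $\Phi$, hence bounded on the compact annulus $\overline V\setminus U$ and zero near $\infty$; and on $U\setminus K(f)$ its pointwise modulus equals that at the (holomorphic) escape image. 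The Measurable Riemann Mapping Theorem then produces a quasiconformal $\phi\colon\EC\to\EC$, normalized by $\phi(\infty)=\infty$, with complex dilatation $\mu$; since $F$ preserves $\mu$, the map $P:=\phi\circ F\circ\phi^{-1}$ is holomorphic, hence a rational map of degree $d$, and having a totally ramified fixed point at $\infty$ it is a polynomial of degree $d$. As $\mu=0$ on $K(f)$, the map $\phi$ is conformal there and conjugates $f$ to $P$ near $K(f)$, so it is a hybrid equivalence; this proves (1).

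For part (2) it suffices to show that two polynomials $P_{1},P_{2}$ of degree $d$ with connected filled Julia set that are hybrid equivalent are affinely conjugate. Let $\varphi\colon\mathcal U_{1}\to\mathcal U_{2}$ be a hybrid equivalence with $\mathcal U_{i}\supset K(P_{i})$ and let $\phi_{i}\colon\EC\setminus K(P_{i})\to\{|\zeta|>1\}$ be the B\"{o}ttcher coordinate, conjugating $P_{i}$ to $\zeta\mapsto\zeta^{d}$. On the annular region $\mathcal U_{1}\setminus K(P_{1})$ the map $\phi_{2}\circ\varphi\circ\phi_{1}^{-1}$ is a quasiconformal self-conjugacy of $\zeta\mapsto\zeta^{d}$ on a one-sided neighborhood of the unit circle; because $\zeta\mapsto\zeta^{d}$ is expanding it extends to a quasiconformal self-conjugacy of $\zeta\mapsto\zeta^{d}$ on all of $\{|\zeta|>1\}$ fixing $\infty$, and such a self-conjugacy must be conformal --- its Beltrami coefficient is $(\zeta\mapsto\zeta^{d})$-invariant, hence vanishes near the superattracting fixed point $\infty$ and therefore everywhere --- so it is a rotation $\zeta\mapsto\omega\zeta$ with $\omega^{d-1}=1$. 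Hence $\varphi=\phi_{2}^{-1}\bigl(\omega\,\phi_{1}(\cdot)\bigr)$ on $\mathcal U_{1}\setminus K(P_{1})$, so gluing $\varphi$ on $\mathcal U_{1}$ to the conformal map $\phi_{2}^{-1}\bigl(\omega\,\phi_{1}(\cdot)\bigr)$ on $\EC\setminus K(P_{1})$ yields a quasiconformal homeomorphism $\Psi\colon\EC\to\EC$ conjugating $P_{1}$ to $P_{2}$ whose complex dilatation vanishes on $K(P_{1})$ and on its complement, hence a.e.; thus $\Psi$ is a M\"{o}bius transformation, and since it fixes the totally ramified fixed point $\infty$ it is affine. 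The two places where the argument has to be done with care are the dilatation bound in the surgery of part (1) and the fact that the hybrid conjugacy of part (2) is forced to be conformal off the Julia set --- both ultimately because the maps involved are holomorphic away from a single controlled annulus.
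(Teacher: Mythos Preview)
The paper does not prove this theorem; it is quoted from Douady--Hubbard as background and used as a black box. Your outline follows their original argument, and part (1) is correct as sketched.

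Part (2), however, contains a genuine error. You claim that a quasiconformal self-conjugacy $\psi$ of $g(\zeta)=\zeta^{d}$ on $\{|\zeta|>1\}$ must be conformal because its Beltrami coefficient is $g$-invariant and ``vanishes near the superattracting fixed point $\infty$''. This is false. Take any nonzero $g$-invariant Beltrami coefficient $\mu$ on $\{|\zeta|>1\}$ (define it freely on a fundamental annulus and propagate by the transformation rule $\mu(\zeta^{d})=\mu(\zeta)\,(\zeta/\bar\zeta)^{d-1}$), extend it by zero to $\overline{\D}$, and integrate. The resulting map $\chi$ conjugates $g$ to a degree-$d$ rational map with totally ramified fixed points at $\chi(0)$ and $\chi(\infty)$, hence after normalisation to $g$ itself; thus $\chi|_{\{|\zeta|>1\}}$ is a \emph{non-conformal} quasiconformal self-conjugacy of $g$. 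In particular $|\mu_\psi|$ is $g$-invariant but need not decay along orbits, your asserted identity $\varphi=\phi_{2}^{-1}\bigl(\omega\,\phi_{1}(\cdot)\bigr)$ on $\mathcal U_{1}\setminus K(P_{1})$ does not follow, and the two pieces you glue need not agree on any open overlap.

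The correct step, which is what Douady--Hubbard actually do, is weaker but sufficient: one shows that $\psi$ extends continuously to the unit circle (from $\psi(\zeta)^{d}=\psi(\zeta^{d})$ one gets $\log|\psi(\zeta)|=d^{-n}\log|\psi(\zeta^{d^{n}})|\to 0$ as $|\zeta|\to 1$, and a similar estimate controls the argument), and on $\{|\zeta|=1\}$ any orientation-preserving homeomorphism commuting with $\zeta\mapsto\zeta^{d}$ is a rotation by a $(d-1)$th root of unity $\omega$. One then glues $\varphi$ on $K(P_{1})$ to the conformal map $\phi_{2}^{-1}\bigl(\omega\,\phi_{1}(\cdot)\bigr)$ on $\EC\setminus K(P_{1})$ \emph{along the common boundary} $\partial K(P_{1})$: the glued map is continuous by the boundary agreement just established, quasiconformal by Bers' (Rickman's) gluing lemma, and has zero dilatation a.e.\ (zero on $K(P_1)$ by the hybrid condition, zero on the complement by conformality), hence is affine. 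Your final sentence then goes through unchanged.
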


In this paper we are only interested in \textit{quadratic-like mappings}, that are the polynomial-like mappings with degree $d=2$. According to Theorem \ref{straightening}, every quadratic-like mapping $(f,U,V)$ is hybrid equivalent to a quadratic polynomial
\begin{equation*}
Q_c(z)=z^2+c \text{\quad where\quad}c\in\C.
\end{equation*}
If the unique critical orbit of $f$ is contained in $U$, then $Q_c$ is unique. We use $\beta_c$ to denote the \textit{$\beta$-fixed point} (i.e., the landing point of the zero external ray) of $Q_c$ and $\beta_c'$ the other preimage of $\beta_c$.

\vskip0.1cm
Let $f$ be a rational map. The \textit{post-critical set} $P(f)$ of $f$ is defined as the closure of $\bigcup_{n\geq 1}f^{\circ n}(\Crit(f))$, where $\Crit(f)=\{c:f'(c)=0\}$ is the set of critical points of $f$. If there exist $p\geq 1$ and two Jordan domains $U\Subset V$ such that $(f^{\circ p},U,V)$ is a quadratic-like mapping with connected Julia set\footnote{The definition of renormalization needs to exclude a special case, i.e. $f$ itself is a polynomial and $U$ is an open neighborhood of the filled Julia set of $f$.}, then $f$ is called \textit{$p$-renormalizable}. The sets $K(f)$, $f(K(f))$, $\cdots$, $f^{\circ (p-1)}(K(f))$ are called \textit{small filled Julia sets}, and $J(f)$, $f(J(f))$, $\cdots$, $f^{\circ (p-1)}(J(f))$ are called \textit{small Julia sets}. The $\beta$-fixed point $\beta_{f^{\circ p}}$ of $(f^{\circ p},U,V)$ is defined as $h^{-1}(\beta_c)$ and the other preimage of $\beta_{f^{\circ p}}$ is defined as $\beta_{f^{\circ p}}'=h^{-1}(\beta_c')$, where $c\in\C$ is uniquely determined such that $f^{\circ p}$ is hybrid conjugated to $Q_c$ by $h$.

\begin{defi}[{Infinitely renormalization}]
The renormlization is \textit{primitive} type if the small Julia sets $J(f)$, $f(J(f))$, $\cdots$, $f^{\circ (p-1)}(J(f))$ are pairwise disjoint. Otherwise, the renormlization is \textit{satellite} type. The map $f$ is \textit{infinitely renormalizable} if $f$ is $p$-renormalizable for infinitely many positive integers $p$.
\end{defi}

For more details on the backgrounds and results on polynomial-like renormlization, see \cite{Mc94b}.

\subsection{Dynamics of McMullen maps}

In this subsection we present some basic notations and results of McMullen maps
\begin{equation}\label{equ:McM-2}
f_\lambda(z)=z^n+\frac{\lambda}{z^n}, \text{\quad where } n\geq 3 \text{ and } \lambda\in\C^*=\C\setminus\{0\}.
\end{equation}
The map $f_\lambda$ has a super-attracting basin of infinity. We use $B_\lambda$ to denote the immediate super-attracting basin of infinity. Note that $f_\lambda^{-1}(\infty)=\{\infty,0\}$. We denote by $T_\lambda$ the Fatou component (i.e. the \textit{trap door}) of $f_\lambda$ containing the origin. It is easy to check that $f_\lambda$ has $2n$ ``free" critical points $c_0$, $c_1$, $\cdots$, $c_{2n-1}$ satisfying $c_i^{2n}=\lambda$, where $0\leq i\leq 2n-1$, and $f_\lambda$ has exactly two ``free" critical values $v_\lambda^\pm=\pm\,2\sqrt{\lambda}$. The following theorem implies that the dynamics of $f_\lambda$ is determined by any one of the free critical orbits.

\begin{thm}[\cite{DLU05}, \cite{DR13}]\label{thm:Escape-3}
Suppose that any one of the free critical points of $f_\lambda$ is attracted by $\infty$. Then one and only one of the following three cases happens:
\begin{enumerate}
\item $f_\lambda(c_i)\in B_\lambda$ for some $i$, then $J(f_\lambda)$ is a Cantor set;
\item $f_\lambda(c_i)\in T_\lambda\neq B_\lambda$ for some $i$, then $J(f_\lambda)$ is a Cantor set of circles;
\item $f_\lambda^{\circ (k-1)}(c_i)\in T_\lambda\neq B_\lambda$ for some $i$ and $k\geq 3$, then $J(f_\lambda)$ is a Sierpi\'{n}ski carpet.
\end{enumerate}
Moreover, if the forward orbit any one of the free critical points of $f_\lambda$ is bounded, then $J(f_\lambda)$ is connected.
\end{thm}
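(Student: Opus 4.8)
The plan is to track the orbit of one free critical value $v_\lambda^+=2\sqrt{\lambda}$ together with the backward orbit of the trap door $T_\lambda$, exploiting the symmetries of $f_\lambda$. First I would record that $f_\lambda(e^{2\pi\ii/n}z)=f_\lambda(z)$ and that the conformal involution $\iota(z):=\sqrt[n]{\lambda}\,/z$ satisfies $f_\lambda\circ\iota=f_\lambda$, so $\iota$ permutes the fibres of $f_\lambda$ and, since $\iota(\infty)=0$, maps $B_\lambda$ conformally onto $T_\lambda$. Together these symmetries place the $2n$ free critical points in a single grand orbit---so ``for some $i$'' is equivalent to ``for all $i$''---and they make the two free critical values $v_\lambda^{\pm}$ dynamically indistinguishable (they have the same image after one iterate). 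Note that in each of the three cases \emph{every} critical point of $f_\lambda$---the $c_i$, together with $0$ and $\infty$---is attracted to $\infty$, so $f_\lambda$ is hyperbolic and all its Fatou components are quasidisks or quasi-annuli.

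Next I would invoke the B\"ottcher coordinate at $\infty$: the basin $B_\lambda$ is simply connected if and only if it contains no $c_i$, if and only if $v_\lambda^+\notin B_\lambda$. If $v_\lambda^+\in B_\lambda$, then the Fatou set is the connected basin of $\infty$ and, by the classical covering--expansion argument (exactly as for a polynomial whose critical point escapes), $J(f_\lambda)$ is a Cantor set; this is case~(a). Otherwise $\overline{B_\lambda}\cap\overline{T_\lambda}=\emptyset$, so $A_0:=\EC\setminus(\overline{B_\lambda}\cup\overline{T_\lambda})$ is a genuine annulus, one has $f_\lambda^{-1}(\overline{B_\lambda})=\overline{B_\lambda}\cup\overline{T_\lambda}$, and $f_\lambda$ restricts to a proper map $A_0\to\EC\setminus\overline{B_\lambda}$ of degree $2n$ with all $2n$ critical points inside $A_0$. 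The remaining dichotomy is then read off from $f_\lambda^{-1}(T_\lambda)$.

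In case~(b) one has $v_\lambda^+\in T_\lambda$, so $f_\lambda^{-1}(T_\lambda)$ contains all $2n$ critical points; a Riemann--Hurwitz count (together with the symmetries, which force it to be connected) shows it is a single essential subannulus $A_1\Subset A_0$ carrying the full degree $2n$, whence $f_\lambda$ maps each of the two components $A_0^{\pm}$ of $A_0\setminus\overline{A_1}$ onto $A_0$ by an unramified degree-$n$ covering. Then $\Mod(A_0^{\pm})=\Mod(A_0)/n$, so superadditivity of the modulus along the chain $A_0^{+},A_1,A_0^{-}$ forces $\Mod(A_1)\le(1-\tfrac2n)\Mod(A_0)$, which is compatible with $\Mod(A_1)>0$ only when $n\ge3$; in that case the iterated nested-annulus construction identifies the complement of the resulting family of annular Fatou components as a Cantor set of circles. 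In case~(c), the hypothesis $f_\lambda^{\circ(k-1)}(c_i)\in T_\lambda$ with $k\ge3$ says that the orbit of $v_\lambda^+$ meets $T_\lambda$ only after at least one step (hence $B_\lambda$ only after at least two), so $v_\lambda^+\notin\overline{T_\lambda}$ and $f_\lambda^{-1}(T_\lambda)$ contains \emph{no} critical point; therefore it is a union of $2n$ pairwise disjoint Jordan disks, each mapped conformally onto $T_\lambda$ and compactly contained in $A_0$. Inductively every Fatou component other than $B_\lambda$ and $T_\lambda$ is a Jordan disk eventually carried univalently onto $T_\lambda$; in particular all Fatou components are simply connected, so $J(f_\lambda)$ is connected and locally connected, hyperbolicity gives it empty interior (in fact zero area), and the closures of the Fatou components are pairwise disjoint---this is obtained by pulling $\overline{B_\lambda}\cap\overline{T_\lambda}=\emptyset$ back along the iterates, which are local homeomorphisms away from the interior critical points. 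Hence $J(f_\lambda)$ is a Sierpi\'nski carpet. Finally, if some (hence every) free critical orbit stays bounded, it avoids the basin of $\infty$, so $B_\lambda$ and $T_\lambda$ are again simply connected, and a parallel argument shows every Fatou component is simply connected, giving a connected Julia set.

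The step I expect to be the main obstacle is case~(c): verifying \emph{all four} defining properties of a Sierpi\'nski carpet simultaneously, and in particular ruling out any tangency between the closures of distinct Fatou components---this is precisely where the hypothesis $k\ge3$ (rather than $k=2$, which puts one in case~(b)) is genuinely used. A comparable difficulty is the rigorous identification of the Cantor set of circles in case~(b), where the quantitative bound $\Mod(A_1)\le(1-2/n)\Mod(A_0)$, hence the standing assumption $n\ge3$, is indispensable.
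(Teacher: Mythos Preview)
The paper does not prove this theorem at all: it is quoted as a known result from \cite{DLU05} and \cite{DR13} and used as a black box, so there is no ``paper's own proof'' to compare against. Your sketch is essentially the argument of those references---the symmetry group generated by $z\mapsto e^{2\pi\ii/n}z$ and the involution $\iota$, the B\"ottcher dichotomy for $B_\lambda$, the Riemann--Hurwitz/modulus analysis of $f_\lambda^{-1}(T_\lambda)$ splitting cases (b) and (c), and Whyburn's criterion for the carpet---and as an outline it is correct. Two small points worth tightening if you write this up in full: the claim that $v_\lambda^{\pm}$ ``have the same image after one iterate'' holds only when $n$ is even (for odd $n$ one has $f_\lambda(-z)=-f_\lambda(z)$, so the two orbits are exchanged by $z\mapsto -z$ rather than merged, which is still enough); and in case~(c) the disjointness of closures needs the intermediate step that $\partial B_\lambda$ contains no critical value, which is where hyperbolicity is used before pulling back.
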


For the topology of the immediate super-attracting basin of infinity, Devaney conjectured that $\partial B_\lambda$ is always a Jordan curve (if it is not a Cantor set) and this has been proved by Qiu, Wang and Yin.

\begin{thm}[{\cite[Theorem 1.1]{QWY12}}]\label{thm:QWY}
The boundary of the immediate basin of infinity $\partial B_\lambda$ of $f_\lambda$ is always a Jordan curve provided $J(f_\lambda)$ is not a Cantor set.
\end{thm}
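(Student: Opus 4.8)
The plan is to reduce the statement to a local connectivity property of $\partial B_\lambda$ together with the absence of pinching ray pairs, and then to establish both by means of a Yoccoz-type puzzle whose pieces shrink to points along $\partial B_\lambda$.

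First I would fix the B\"ottcher picture. Since $J(f_\lambda)$ is not a Cantor set, either the free critical orbits stay bounded (in which case they cannot enter $B_\lambda$) or Theorem~\ref{thm:Escape-3} rules out its case~(a), so again no free critical value lies in $B_\lambda$; in either case no $c_i$ lies in $B_\lambda$, and likewise $T_\lambda\neq B_\lambda$, since otherwise $B_\lambda$ would be totally invariant, would contain a free critical point by a ramification count, and $J(f_\lambda)$ would then be a Cantor set by Theorem~\ref{thm:Escape-3}. Hence the only critical point of $f_\lambda$ in $B_\lambda$ is the fixed point $\infty$, of local degree $n$, so by Riemann--Hurwitz $B_\lambda$ is simply connected and the B\"ottcher coordinate at $\infty$ extends to a conformal isomorphism $\phi_\lambda\colon B_\lambda\to\EC\setminus\overline{\D}$ conjugating $f_\lambda|_{B_\lambda}$ to $z\mapsto z^n$; the same count applied to the degree-$n$ proper map $f_\lambda\colon T_\lambda\to B_\lambda$ shows $T_\lambda$ is also simply connected, with an analogous coordinate $\psi_\lambda$ satisfying $\phi_\lambda\circ f_\lambda=\psi_\lambda^n$ on $T_\lambda$. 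Writing $R_\theta:=\phi_\lambda^{-1}(\{re^{2\pi\ii\theta}:r>1\})$ for the external ray of angle $\theta$, the curve $\partial B_\lambda$ is a Jordan curve exactly when every $R_\theta$ lands at a point $\gamma(\theta)\in\partial B_\lambda$, the map $\gamma\colon\T\to\partial B_\lambda$ is continuous, and $\gamma$ is injective; equivalently, $\partial B_\lambda$ is locally connected and no point of it is the landing point of two distinct external rays of $B_\lambda$.

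Next I would build the puzzle and carry out the shrinking argument. The fixed angle $0$ yields a ray $R_0$ landing at a fixed point $\beta_\lambda\in\partial B_\lambda$; together with the $n-1$ external rays fixed under $z\mapsto z^n$, a B\"ottcher equipotential of $B_\lambda$ and one of $T_\lambda$ (via $\psi_\lambda$), these arcs form a depth-$0$ graph $\Gamma_0$, and I set $\Gamma_k:=f_\lambda^{-k}(\Gamma_0)$ and take the depth-$k$ puzzle pieces to be the components of the region bounded by $\Gamma_k$ that meet $J(f_\lambda)$; these pieces are nested, $f_\lambda$ carries a depth-$(k+1)$ piece onto a depth-$k$ piece, and off the grand orbit of the depth-$0$ ray landing points the $\Gamma_k$'s separate the points of $J(f_\lambda)$. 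Denote by $P_k(x)$ the depth-$k$ piece containing $x\in\partial B_\lambda$. The forward orbit of $x$ stays in the compact forward-invariant set $\partial B_\lambda$, which contains no critical point of $f_\lambda$ (in cases~(b) and~(c) of Theorem~\ref{thm:Escape-3} the free critical points eventually land in the Fatou component $T_\lambda$, hence lie off $J(f_\lambda)\supseteq\partial B_\lambda$, and one checks separately that they are disjoint from $\partial B_\lambda$ in the remaining connected case), and --- this is the combinatorial crux --- $\partial B_\lambda$ is disjoint from every Cremer point of $f_\lambda$, from the boundary of every Siegel disk, and from every small Julia set of every renormalization, each of which sits compactly inside a puzzle piece that does not meet $B_\lambda$. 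Therefore the first-return dynamics of the puzzle pieces along the orbit of $x$ is non-renormalizable in Yoccoz's sense, and the modulus-growth machinery (critical annuli, or the Kahn--Lyubich enhanced nest, adapted to the present rational setting in which the relevant quadratic-like returns live in the collar between $B_\lambda$ and $T_\lambda$) produces around $x$ a nested sequence of annuli of moduli summing to infinity, whence $\diam P_k(x)\to0$ and $\bigcap_k\overline{P_k(x)}=\{x\}$. Consequently every prime end of $B_\lambda$ has a point impression, so every $R_\theta$ lands and $\gamma$ is continuous and surjective; moreover $\gamma$ is injective, since the angles whose rays lie in $\bigcup_k\Gamma_k$ are dense in $\T$, so two distinct angles have their rays in distinct depth-$k$ pieces for $k$ large, while for the countably many points in the grand orbit of the depth-$0$ ray landing points one checks injectivity by hand. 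Hence $\partial B_\lambda$ is a Jordan curve --- and so, via the involution $\sigma(z)=\lambda^{1/n}/z$ (which satisfies $f_\lambda\circ\sigma=f_\lambda$ and interchanges $B_\lambda$ with $T_\lambda$), so is $\partial T_\lambda$.

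The step I expect to be the main obstacle is precisely the shrinking of puzzle pieces at points of $\partial B_\lambda$, because $J(f_\lambda)$ itself need not be locally connected --- indeed it is not in the Cremer, Siegel and infinitely renormalizable regimes that Theorems~\ref{thm-area} and~\ref{thm-area-0-dim-2} exploit. The way around this is the separation statement that the orbit of every point of $\partial B_\lambda$ avoids the renormalization zones and never accumulates on a Siegel-disk boundary or a Cremer point, so that the obstructions to local connectivity never touch $\partial B_\lambda$; making this disjointness rigorous, and then transporting the quadratic modulus estimates to the two-sided geometry of the McMullen family, is where the real work lies.
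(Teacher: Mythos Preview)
The paper does not prove this theorem at all: Theorem~\ref{thm:QWY} is quoted verbatim from \cite[Theorem~1.1]{QWY12} and is used as a black box throughout the rest of the argument. So there is no ``paper's own proof'' to compare against.

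That said, your outline is essentially the strategy that Qiu--Wang--Yin actually carry out in \cite{QWY12}: extend the B\"ottcher coordinate over all of $B_\lambda$ (using that no free critical point lies there once the Cantor case is excluded), build a Yoccoz-type puzzle from fixed external rays and equipotentials of $B_\lambda$ and $T_\lambda$, and prove that puzzle pieces shrink to points along $\partial B_\lambda$. Their paper executes this via a modified puzzle adapted to the annular structure between $B_\lambda$ and $T_\lambda$, with the combinatorial analysis of non-renormalizable returns that you allude to; the argument is long and technical, but your identification of the core difficulty --- controlling puzzle-piece diameters without knowing global local connectivity of $J(f_\lambda)$ --- is accurate.

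One factual slip worth flagging: you assert that every small Julia set ``sits compactly inside a puzzle piece that does not meet $B_\lambda$'', i.e.\ is disjoint from $\partial B_\lambda$. This is false in general. Lemma~\ref{lem:copy}(b) in the present paper records exactly the opposite: for $\lambda\in\MM$ the small filled Julia set $K(\widetilde f_\lambda)$ meets $\partial B_\lambda$ precisely at the $\beta$-fixed point $\beta_{\widetilde f_\lambda}$. The correct statement, and the one \cite{QWY12} uses, is that this intersection is a single \emph{repelling} point which is a vertex of the puzzle graph; puzzle pieces at such a point shrink by an elementary local expansion argument, while away from this point the small Julia set stays inside pieces disjoint from $B_\lambda$, so the renormalization does not obstruct shrinking along $\partial B_\lambda$. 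Your separation claim therefore needs to be weakened to ``touches $\partial B_\lambda$ only at repelling periodic points'', after which the rest of your sketch goes through.
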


For $k\geq 0$, we denote
\begin{equation*}
\MH_{k}:=\{\lambda \in \C^{*}: k \text{ is the minimal integer such that } f_{\lambda}^{\circ k}(c_0)\in B_{\lambda}\}.
\end{equation*}
Every component of $\MH_{k}$ is called an \emph{escape domain}. In particular, $\MH_0$ is called the \textit{Cantor locus}; $\MH_1=\emptyset$; $\MH_2$ is the \textit{McMullen domain}; and all connected components of $\MH_k$ with $k\geq 3$ are called \textit{Sierpi\'{n}ski holes}. The complement of these escape domains is called the \emph{non-escape locus} $\mathcal{N}$.
Moreover, every hyperbolic component in $\mathcal{N}$ is contained in some homeomorphic image of the Mandelbrot set which corresponds to the renormalizable or $*$-renormalizable parameters (see \cite[\S 7]{St06}, \cite[\S 5]{QWY12} and Figure \ref{Fig:McM-parameter}).

\begin{figure}[!htpb]
  \setlength{\unitlength}{1mm}
  \centering
  \includegraphics[width=0.45\textwidth]{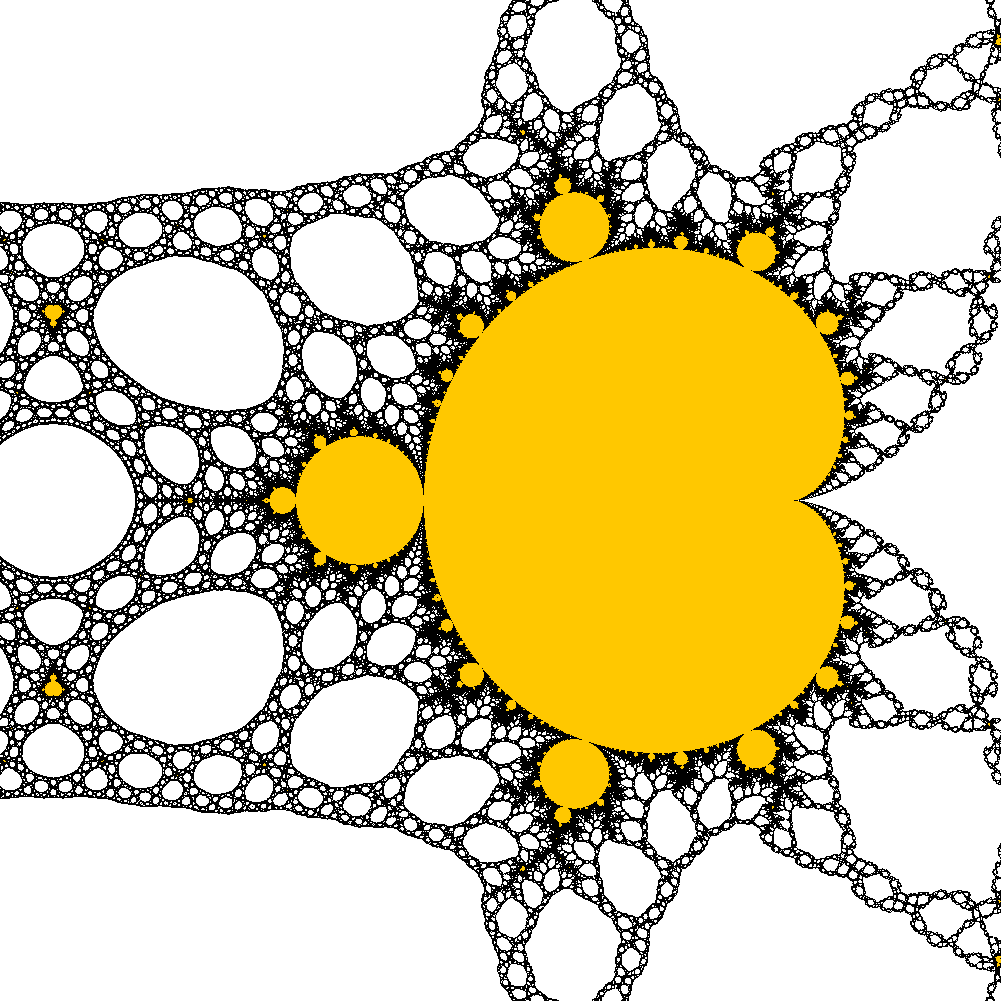}\quad
  \includegraphics[width=0.45\textwidth]{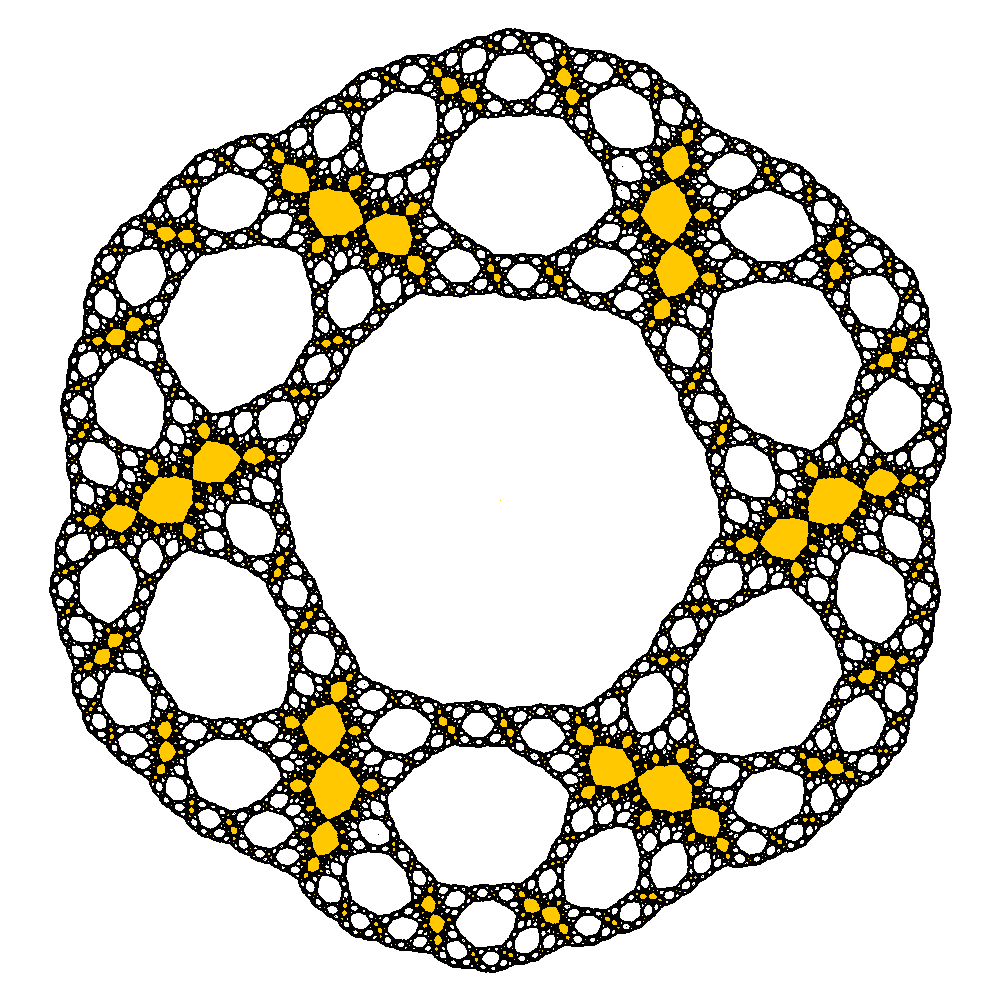}
  \caption{Left: The zoom of the parameter plane of $f_\lambda(z)=z^3+\lambda/z^3$ near renormalizable parameters and a copy $\MM$ of the Mandelbrot set can be seen clearly. Right: The Julia set of $f_{\lambda_0}$, where $\lambda_0$ is chosen such that $f_{\lambda_0}$ is renormlizable and contains some copies of the Julia set of $P_\alpha(z)= e^{2\pi\ii\alpha}z+z^2$ with $\alpha\in\R\setminus\Q$. }
  \label{Fig:McM-parameter}
\end{figure}

In order to find the parameters such that the Julia sets of $f_\lambda$'s have positive area and full Hausdorff dimesion, we need to find some suitable renormalizable parameters in the non-escape locus $\mathcal{N}$. If $\lambda$ is an $1$-renormalizable parameter, recall that $\beta_{f_\lambda}$ and $\beta_{f_\lambda}'$, respectively, are the $\beta$-fixed point and its preimage of the quadratic-like mapping $(f_\lambda, U, V)$ for some Jordan domains $U$ and $V$.

\begin{lema}[{\cite[\S 7]{St06}, \cite[\S 5]{QWY12}, see Figure \ref{Fig:McM-parameter}}]\label{lem:copy}
For the McMullen maps $f_\lambda$ defined in \eqref{equ:McM-2}, we have
\begin{enumerate}
\item There exists a homeomorphic copy $\MM$ of the Mandelbrot set $M$ in the non-escape locus $\mathcal{N}$ such that for each $c\in M$, there exist a unique $\lambda\in\MM$, two Jordan domains $U_\lambda\Subset V_\lambda$ such that $\widetilde{f}_\lambda=(f_\lambda,U_\lambda,V_\lambda)$ is $1$-renormalizable and hybird equivalent to $Q_c$;
\item For all $\lambda\in\MM$, $K(\widetilde{f}_\lambda)\cap\partial B_\lambda=\{\beta_{\tilde{f}_\lambda}\}$ and $K(\widetilde{f}_\lambda)\cap\partial T_\lambda=\{\beta_{\tilde{f}_\lambda}'\}$.
\end{enumerate}
\end{lema}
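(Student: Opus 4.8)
The plan is to realize a neighbourhood of the copy $\MM$ as a Mandelbrot-like family of quadratic-like mappings in the sense of Douady and Hubbard, to apply the straightening theorem for such families in order to obtain statement (a), and then to extract statement (b) from the geometry of the construction together with the inversive symmetry of $f_\lambda$. First I would locate the centre of $\MM$. Among the $2n$ free critical points $c_i$ with $c_i^{2n}=\lambda$ choose a holomorphic branch $c_0=c_0(\lambda)$, and let $\lambda_0$ be the parameter for which $c_0(\lambda_0)$ is a fixed point of $f_{\lambda_0}$, hence automatically super-attracting: solving $f_\lambda(c_0)=c_0$ together with $c_0^{2n}=\lambda$ gives $2c_0^{\,n}=c_0$, i.e.\ the explicit value $c_0(\lambda_0)^{\,n-1}=1/2$. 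At $\lambda_0$ the free critical orbit is bounded, so $\lambda_0\in\MN$ and, by Theorem \ref{thm:Escape-3}, $J(f_{\lambda_0})$ is connected; therefore $B_\lambda$ is simply connected with Jordan boundary (Theorem \ref{thm:QWY}) for $\lambda$ in a neighbourhood of $\lambda_0$, and $B_\lambda$ carries a B\"{o}ttcher coordinate, external rays and equipotentials depending holomorphically on $\lambda$ there.

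Using a short equipotential arc and finitely many external ray arcs of $B_\lambda$ near the landing point $\beta(\lambda)\in\partial B_\lambda$ of a fixed external ray --- a Yoccoz-type puzzle, together with its pull-back around the trap door $T_\lambda$ by the involution $\sigma_\lambda$ introduced below --- I would build, for $\lambda$ in a suitably chosen Jordan domain $W\ni\lambda_0$, a holomorphic family of Jordan domains $U_\lambda\Subset V_\lambda$ with $\Crit(f_\lambda)\cap V_\lambda=\{c_0(\lambda)\}$ and $f_\lambda:U_\lambda\to V_\lambda$ proper of degree two, so that $\widetilde f_\lambda=(f_\lambda,U_\lambda,V_\lambda)$ is quadratic-like and $f_\lambda$ is $1$-renormalizable. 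The construction is arranged so that $\overline{V_\lambda}$ meets $\partial B_\lambda$ only along a short sub-arc around $\beta(\lambda)$ and meets $\partial T_\lambda$ only along a short arc around $\beta'(\lambda):=\sigma_\lambda(\beta(\lambda))$, and so that the small filled Julia set is attached to $\overline{B_\lambda}$ and $\overline{T_\lambda}$; this builds a large part of statement (b) into the set-up, and is also what makes $J(f_\lambda)$ connected with the small Julia sets glued on.

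The heart of the argument, and the step I expect to be the main obstacle, is to verify that $(\widetilde f_\lambda)_{\lambda\in W}$ is a \emph{Mandelbrot-like} family: that, after shrinking $W$, one still has $\overline{U_\lambda}\subset V_\lambda$ throughout $W$ --- in particular near $\partial W$, where $K(\widetilde f_\lambda)$ has become a Cantor set --- and, above all, that the critical value of $\widetilde f_\lambda$ winds exactly once around $\partial U_\lambda$ as $\lambda$ traverses $\partial W$ once. This winding (transversality) condition is where the detailed combinatorics of the McMullen family enters: one tracks how the free critical value $v_\lambda^{+}=2\sqrt{\lambda}$, hence the puzzle piece containing it, moves against the parameter, compares this with the motion of $\partial U_\lambda$, and uses the known layout of the escape domains --- the Sierpi\'{n}ski holes $\MH_k$ surrounding $\MM$ --- to force a single turn; it is precisely this that makes the straightening map a homeomorphism onto all of $M$ rather than onto a proper subset. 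Granting it, the Mandelbrot-like version of Theorem \ref{straightening} (due to Douady and Hubbard \cite{DH85}) applies: the connectedness locus $\MM:=\{\lambda\in W:K(\widetilde f_\lambda)\text{ is connected}\}$ is a homeomorphic copy of $M$ via the straightening map $\chi(\lambda)=c$, where $\widetilde f_\lambda$ is hybrid equivalent to $Q_c$, and $\MM\subset\MN$ because a connected $K(\widetilde f_\lambda)$ traps the free critical orbit of $f_\lambda$ in the bounded set $K(\widetilde f_\lambda)$. Given $c\in M$, the required parameter is the unique $\lambda=\chi^{-1}(c)$, with $U_\lambda\Subset V_\lambda$ the domains above; since $f_\lambda$ is rational but not a polynomial, $(f_\lambda,U_\lambda,V_\lambda)$ is a genuine $1$-renormalization. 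This proves (a).

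For (b), set $\beta=\beta_{\widetilde f_\lambda}$; it is a fixed point of $f_\lambda$ (as $p=1$) with nonzero multiplier, it lies on $\partial B_\lambda$ by construction, and $\beta\in K(\widetilde f_\lambda)$ by definition, so $E:=K(\widetilde f_\lambda)\cap\partial B_\lambda$ is a nonempty forward-invariant compact subset of a short sub-arc of the Jordan curve $\partial B_\lambda$. I would first check $E=\{\beta\}$ at the centre $\lambda_0$, where $K(\widetilde f_{\lambda_0})$ is the closure of the super-attracting basin of $c_0(\lambda_0)$, a Jordan domain: the statement is then that two closed Jordan domains with Jordan boundaries lying in $J(f_{\lambda_0})$ and sharing the repelling fixed point $\beta$, on which $f_{\lambda_0}$ is expanding, can meet only at $\beta$. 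For a general $\lambda\in\MM$ this propagates by combining the holomorphic motion of $\partial B_\lambda$ over $W$ (legitimate because $\partial B_\lambda$ remains a Jordan curve by Theorem \ref{thm:QWY}) with the expansion of $f_\lambda$ on $\partial B_\lambda$, which leaves no room for a second intersection point; the details follow \cite{St06} and \cite{QWY12}. Finally, the involution $\sigma_\lambda(z)=c_0(\lambda)^2/z$ satisfies $f_\lambda\circ\sigma_\lambda=f_\lambda$ --- because $\bigl(c_0(\lambda)^2\bigr)^n=c_0(\lambda)^{2n}=\lambda$ --- fixes $c_0(\lambda)$, and exchanges $B_\lambda$ with $T_\lambda$ since it swaps $0$ and $\infty$; hence $\sigma_\lambda$ preserves the small filled Julia set $K(\widetilde f_\lambda)$ (it fixes the critical point it contains), so $K(\widetilde f_\lambda)\cap\partial T_\lambda=\sigma_\lambda(E)=\{\sigma_\lambda(\beta)\}$. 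The point $\sigma_\lambda(\beta)$ lies in $K(\widetilde f_\lambda)$ and is an $f_\lambda$-preimage of $\beta$ distinct from $\beta$ (since $\beta$ has nonzero multiplier while $\pm c_0(\lambda)$ are critical points); as $f_\lambda:U_\lambda\to V_\lambda$ is two-to-one and $\beta$ is not its critical value, its only other preimage in $K(\widetilde f_\lambda)$ is $\beta_{\widetilde f_\lambda}'=h^{-1}(\beta_c')$, and the proof of (b) is complete.
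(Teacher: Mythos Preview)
The paper does not give its own proof of this lemma: it is stated with citations to \cite[\S 7]{St06} and \cite[\S 5]{QWY12}, and the text immediately moves on to the next subsection. So there is no ``paper's own proof'' to compare against; the lemma is imported wholesale from those references.

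Your proposal is a reasonable sketch of how those references actually proceed: locate a superattracting centre, build a Yoccoz-puzzle piece around the free critical point to get a holomorphically moving quadratic-like family, verify the Mandelbrot-like (winding) condition, and apply the Douady--Hubbard straightening for families. The use of the involution $\sigma_\lambda(z)=c_0(\lambda)^2/z$ (with $(c_0^2)^n=\lambda$, fixing $c_0$, swapping $0\leftrightarrow\infty$) to transport the statement $K(\widetilde f_\lambda)\cap\partial B_\lambda=\{\beta\}$ to the trap door side is the standard trick. Two small points you should firm up if you ever write this out in full: first, the claim that $\sigma_\lambda$ preserves $K(\widetilde f_\lambda)$ requires that the puzzle piece $U_\lambda$ itself be chosen $\sigma_\lambda$-symmetric (you hint at this but do not say it), otherwise $\sigma_\lambda(z)$ need not lie in $U_\lambda$ at time $0$; second, your argument for $E=\{\beta\}$ at general $\lambda\in\MM$ by ``propagating via holomorphic motion'' is vague --- in \cite{QWY12} this is handled instead by the puzzle construction itself, which forces $K(\widetilde f_\lambda)\setminus\{\beta,\beta'\}$ into the open region strictly between $\partial B_\lambda$ and $\partial T_\lambda$. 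With those caveats, your outline matches the cited literature.
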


The statement in Lemma \ref{lem:copy}(b) is crucial in the proof that $f_\lambda$ is a Sierpi\'{n}ski carpet since we need to control the position of the post-critical set of $f_\lambda$.

\subsection{Topology and geometry of quadratic Julia sets}

In this subsection, we state some useful results on quadratic polynomials, including the topological and geometric properties. We first recall a result due to Lyubich and Shishikura.

\begin{thm}[{\cite{Ly91}, \cite{Sh98}}]\label{thm-non-renorm}
There exist non-renormalizable quadratic Julia sets with Hausdorff dimension two and with zero area\footnote{Lyubich proved that the Julia set of a quadratic polynomial has zero area if it has no irrational indifferent periodic points and is not infinitely renormalizable.}.
\end{thm}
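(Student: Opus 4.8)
\medskip
\noindent\textbf{Proof proposal.}
The plan is to separate the statement into a dimension part and an area part, both available off the shelf, and then to locate a single parameter meeting all the requirements by a Baire category argument. For the area part I would use Lyubich's theorem \cite{Ly91} quoted in the footnote: if $Q_c$ has no irrational indifferent periodic point and is not infinitely renormalizable, then $\Area(J(Q_c))=0$. Since a non-renormalizable map is in particular not infinitely renormalizable, it therefore suffices to find $c\in\partial M$ for which $Q_c$ is non-renormalizable, has no indifferent cycle, and has $\dim_H J(Q_c)=2$.

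For the dimension part I would use Shishikura's theorem \cite{Sh98}, whose proof establishes the following stronger statement: for every $\varepsilon>0$, the set $U_\varepsilon$ of parameters $c\in\partial M$ such that $J(Q_c)$ contains a repelling hyperbolic set of Hausdorff dimension larger than $2-\varepsilon$ is open and dense in $\partial M$. Then $\mathcal G:=\bigcap_{n\ge 1}U_{1/n}$ is a dense $G_\delta$ in the compact space $\partial M$, and every $c\in\mathcal G$ satisfies $\dim_H J(Q_c)=2$. Openness of $U_\varepsilon$ is the soft step: a repelling hyperbolic set $\Lambda_0\subset J(Q_{c_0})$ persists holomorphically, so for $c$ near $c_0$ there is a hyperbolic set $\Lambda_c\subset J(Q_c)$ conjugate to $\Lambda_0$ whose Hausdorff dimension depends real-analytically on $c$ through the Bowen pressure equation $P_{\Lambda_c}(-t\log|Q_c'|)=0$, so $c_0$ is interior to $U_\varepsilon$. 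Density of $U_\varepsilon$ is the substantial step and is where parabolic implosion enters: parabolic parameters (roots of hyperbolic components of $M$) are dense in $\partial M$, and near such a $c_*$ one perturbs $Q_{c_*}$ through the parabolic bifurcation so that the free critical orbit is captured by the repeller opening up in the ``gate''; the Douady--Lavaurs--Shishikura theory of Fatou coordinates and horn maps, together with Shishikura's quantitative estimates, produces in $J(Q_c)$ a hyperbolic repeller whose dimension exceeds the hyperbolic dimension of $Q_{c_*}$ by a definite amount, and iterating this ``dimension explosion'' along a finite chain of parabolic parameters drives the dimension above $2-\varepsilon$.

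It remains to run the category argument inside $\partial M$. The parameters for which $Q_c$ has an indifferent cycle form a meager $F_\sigma$ in $\partial M$: for each period $p$ the set of such parameters is a finite union of real-analytic arcs (``neutral curves''), and each arc is nowhere dense in $\partial M$ because parabolic satellite bulbs and their decorations accumulate densely on it. The renormalizable parameters in $\partial M$ are likewise meager: they are contained in the countable union of the boundaries of the small Mandelbrot copies, and each such boundary is nowhere dense in $\partial M$ for the same reason, since decorations of $M$ are attached densely along it and hence no neighborhood in $\partial M$ is swallowed by the copy. Deleting these two meager sets from the dense $G_\delta$ set $\mathcal G$ leaves a residual, hence nonempty, subset of $\partial M$; any $c$ in it is non-renormalizable, has no indifferent cycle, and has $\dim_H J(Q_c)=2$, so Lyubich's theorem yields $\Area(J(Q_c))=0$. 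This proves the theorem.

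The step I expect to be the main obstacle is the density half of the dimension statement --- Shishikura's strict jump of Hausdorff dimension at parabolic parameters. It rests on the full apparatus of parabolic implosion (Lavaurs maps, Fatou coordinates, horn maps and their near-degeneration as $c\to c_*$), on a delicate geometric estimate for the dimension of the repeller created in the gate, and on the combinatorial control needed to iterate the explosion finitely many times. The category argument of the third paragraph is routine by comparison, although the assertion that the neutral curves and the boundaries of the small Mandelbrot copies are nowhere dense in $\partial M$ does call for a careful justification via the density of parabolic decorations.
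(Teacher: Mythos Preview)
The paper does not supply its own proof of this statement: it is recorded as a known result, attributed to Lyubich and Shishikura, with a pointer to \cite{YY18} for an explicit construction. So there is no ``paper's proof'' to compare against, and your task is really to assemble a proof from the cited sources.

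Your strategy is the natural one and is essentially correct. Splitting into a dimension part (Shishikura) and an area part (Lyubich), and intersecting via Baire category in the complete metric space $\partial M$, is exactly how one extracts a single parameter with both properties. Your account of Shishikura's mechanism is accurate: persistence of hyperbolic sets gives openness of $U_\varepsilon$, parabolic implosion gives density, and $\mathcal G=\bigcap_n U_{1/n}$ is residual with $\dim_H J(Q_c)=2$ for every $c\in\mathcal G$. For the neutral locus, your argument is fine; alternatively, since every nonempty relatively open subset of $\partial M$ has Hausdorff dimension two (this is part of what Shishikura proves), no real-analytic arc can contain such a set, so each neutral curve is nowhere dense.

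The one place that deserves a bit more care is the claim that each small Mandelbrot copy $M'$ meets $\partial M$ in a nowhere dense set. Your phrasing ``contained in the boundaries of the small copies'' is slightly loose; what you need is that $M'\cap\partial M$ is closed with empty interior in $\partial M$. Closedness is clear. Since $M'\subset M$, any interior point of $M'$ in $\C$ lies in $\mathrm{int}(M)$, hence $M'\cap\partial M\subset\partial M'$. Your decoration argument then works: the tips of $M'$ (preimages under the straightening homeomorphism of the dyadic tips of $M$) are dense in $\partial M'$, and at each tip a decoration of $M\setminus M'$ is attached, furnishing points of $\partial M\setminus M'$ arbitrarily close. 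This shows $M'\cap\partial M$ has empty interior in $\partial M$; since there are countably many copies, the renormalizable locus in $\partial M$ is meager. If you prefer to avoid this step altogether, you can run Shishikura's perturbation so that at each stage the nearby parameter is chosen to be a non-renormalizable Misiurewicz point (these are dense in $\partial M$ and stable under small perturbation of the target dimension), and then pass to a limit in $\partial M$; the Lyubich area statement applies along the whole sequence and at the limit. Either route yields the theorem.
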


One may refer to \cite{YY18} for a more precise construction of the quadratic polynomials in Theorem \ref{thm-non-renorm}. The precise definition of Hausdorff dimension will be given in \S\ref{1-dimension} since in that section we need to use the specific definition to calculate the Hausdorff dimension of some Sierpi\'{n}ski carpets.

\vskip0.1cm
For $\alpha\in\R\setminus\Q$, we define
\begin{equation*}
P_\alpha(z):=e^{2\pi\ii\alpha}z+z^2.
\end{equation*}
The origin is called a \textit{Cremer fixed point} if $P_\alpha$ cannot be locally linearized near the origin. Otherwise, $P_\lambda$ has a simply connected Fatou component containing the origin, which is called the \textit{Siegel disk} of $P_\alpha$ centered at the origin\footnote{Obviously, Cremer fixed points and Siegel disks can be defined similarly for rational maps.}.

\begin{thm}[{\cite{BC12}, \cite{AL15}}]\label{thm:positive-area}
There exist quadratic polynomials that have Julia sets with positive area. Such quadratic polynomials either contain a Cremer fixed point, a Siegel disk or are infinitely renormalizable\footnote{The infinitely renormalizable quadratic polynomials constructed by Buff and Ch\'{e}ritat are infinitely satellite renormalizable while Avila and Lyubich's examples are infinitely primitive renormalizable.}.
\end{thm}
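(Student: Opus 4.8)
Since this is the celebrated theorem of Buff--Ch\'{e}ritat \cite{BC12} (with the infinitely renormalizable case also treated by Avila--Lyubich \cite{AL15}), I can only outline the architecture of the known proof. I would work throughout in the family $P_\alpha(z)=e^{2\pi\ii\alpha}z+z^2$ and reduce the statement to an area comparison. In the regimes relevant here the only bounded Fatou components are the Siegel disk $\Delta_\alpha$ (when the fixed point is Siegel) together with its iterated preimages, so $\Int K(P_\alpha)$ is exactly this ``Siegel part'' and $\Area(J(P_\alpha))=\Area(K(P_\alpha))-\Area(\Int K(P_\alpha))$; in the Cremer case $\Int K(P_\alpha)=\emptyset$ and $\Area(J)=\Area(K)$. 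Thus the goal becomes to produce $\alpha$ whose filled Julia set carries strictly more area than its Siegel part.

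The plan rests on three analytic ingredients. First I would import the precise control of the size of Siegel disks: following Yoccoz and Marmi--Moussa--Yoccoz, the conformal radius $r(\alpha)$ of $\Delta_\alpha$ satisfies that $\log r(\alpha)+\Phi(\alpha)$ extends to a continuous function of $\alpha$, where $\Phi$ is the Brjuno function. This renders $\Area(\Int K(P_\alpha))$ controllable along the approximating sequences I will build, so that the Siegel part cannot silently absorb the area I intend to create. Second, and this is the engine, I would approximate the target $\alpha$ by a sequence $\alpha_n$ of Brjuno (hence Siegel) parameters obtained by appending a very large partial quotient to the continued fraction of $\alpha_{n-1}$ and then truncating. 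Each such step places $P_{\alpha_n}$ in a near-parabolic regime over a convergent $p/q$ of $\alpha$, and parabolic implosion (Douady--Lavaurs--Shishikura), made quantitative through Inou--Shishikura near-parabolic renormalization, shows that a definite, uniformly bounded-below amount of plane that was escaping through the parabolic petals becomes trapped, so that $\Area(K(P_{\alpha_n}))$ gains a chunk lying outside the Siegel part.

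Third, I would pass to the limit. Because escaping is an open condition, $c\mapsto K_c$ is upper semicontinuous, so $\limsup_n K(P_{\alpha_n})\subseteq K(P_\alpha)$; applying the reverse Fatou lemma to the indicator functions then yields $\Area(K(P_\alpha))\geq\limsup_n\Area(K(P_{\alpha_n}))$. Combining this lower bound with the control of the Siegel part from the first ingredient and the lower bound on the trapped (non-Siegel) area from the second, I would conclude $\Area(K(P_\alpha))-\Area(\Int K(P_\alpha))\geq\varepsilon_0>0$, that is, $\Area(J(P_\alpha))>0$. The three prescribed types are then selected through the tail of the continued fraction of the limit $\alpha$: letting the appended partial quotients grow fast enough to violate the Brjuno condition produces a Cremer fixed point; keeping them controlled so that Brjuno still holds produces a Siegel disk; and running the parallel construction through satellite (tower) renormalization gives the infinitely renormalizable example of \cite{BC12}, while \cite{AL15} reach the same conclusion through primitive renormalization.

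The main obstacle is precisely the uniform lower bound of the second ingredient: quantifying exactly how much area is trapped at each near-parabolic perturbation, and, crucially, guaranteeing that this area persists into the limit inside $J(P_\alpha)$ rather than leaking back out to $\infty$ or being swallowed into $\Int K(P_\alpha)$. Making these area comparisons uniform in $n$ is what forces the use of the full near-parabolic renormalization machinery together with the delicate ``no area is lost'' estimates; this quantitative control, rather than any single soft step, is the technical heart of the theorem.
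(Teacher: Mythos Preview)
The paper does not prove this theorem; it is quoted verbatim from the cited references \cite{BC12} and \cite{AL15} and used as a black box in the proof of Theorem~\ref{thm-area}. There is therefore no proof in the paper to compare your proposal against.

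Your outline is a recognizable high-level summary of the Buff--Ch\'{e}ritat architecture, but one point of emphasis is inverted. The engine of the second ingredient is not that parabolic implosion \emph{traps} previously escaping area into $K(P_{\alpha_n})$; rather, the hard estimate is that under the chosen perturbation (appending a very large partial quotient) the filled Julia set does not \emph{lose} much area, i.e.\ $\Area(K(P_{\alpha_n}))\ge(1-\varepsilon_n)\Area(K(P_{\alpha_{n-1}}))$, while the Siegel disk---and hence $\Int K(P_{\alpha_n})$---shrinks by a definite factor. The Inou--Shishikura near-parabolic renormalization is invoked to bound the area \emph{leakage} out of $K$ during the perturbation, not to manufacture a gain. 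With that correction your sketch matches the published strategy; the infinitely renormalizable variants are obtained exactly as you say, via satellite towers in \cite{BC12} and primitive towers in \cite{AL15}.
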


Any irrational number $\alpha\in\R\setminus\Q$ has a continued fractional expansion $\alpha=[a_0;a_1,a_2,\cdots]$, where $a_0\in\Z$ and $a_n\geq 1$ for all $n\geq 1$. Let $N\geq 1$ be a given integer. We denote
\begin{equation}\label{equ:high-type}
\HT_{N}=\{ \alpha=[a_0;a_1,a_2,\cdots]: \ \forall\, n\ge1,\, a_n\ge N \}.
\end{equation}
Every irrational number in $\HT_N$ is called of \textit{high type} (of $N$).

\begin{thm}\label{thm:hairy-cycle}
There exists $N_0\geq 1$ such that for all $\alpha \in \HT_{N_0}$,
\begin{enumerate}
\item (\cite{IS08}) The $\beta$-fixed point of $P_\alpha$ is disjoint with the post-critical set of $P_\alpha$;
\item (\cite{Ch17}, \cite{SY18}) The boundary of the Siegel disk $\Delta_\alpha$ (if any) of $P_\alpha$ is a Jordan curve; and
\item (\cite{Ch17}) If $\partial\Delta_\alpha$ does not contain the critical point (such $\alpha$ exists), then the post-critical set of $P_\alpha$ is a \emph{one-side hairy circle}. In particular, the post-critical set satisfies $P(P_\alpha)\cap (\bigcup_{k\geq 1}P_\alpha^{-k}(\overline{\Delta}_\alpha)\setminus\overline{\Delta}_\alpha)=\emptyset$.
\end{enumerate}
\end{thm}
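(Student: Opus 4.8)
The three assertions all rest on the Inou--Shishikura near-parabolic renormalization, so the plan is to install that machinery and then read off the geometric control needed in each case. First I would fix $N_0$ so large that for every $\alpha=[0;a_1,a_2,\cdots]\in\HT_{N_0}$ a suitable affine normalization of a restriction of $P_\alpha$ lies in the domain of the near-parabolic renormalization operator $\mathcal{R}$, and the orbit $\mathcal{R}^{n}(P_\alpha)$, $n\geq 1$, is well defined and remains in the Inou--Shishikura class, a compact family of maps. The successive maps $\mathcal{R}^{n}(P_\alpha)$ have neutral fixed point $0$ with rotation numbers equal to the Gauss iterates $\alpha_n=[0;a_{n+1},a_{n+2},\cdots]$, which again lie in $\HT_{N_0}$. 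Crucially, each level of the tower comes equipped with explicit changes of coordinates that identify a petal $\mathcal{P}_n$ attached to $0$ and capturing the critical value of $\mathcal{R}^{n}(P_\alpha)$ with a fundamental domain of the return map, together with an a priori bound on the size of $\mathcal{P}_n$.

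For part~(a), I would track the orbit of the critical point of $P_\alpha$ through this tower. The Inou--Shishikura estimates confine this orbit---hence the whole post-critical set $P(P_\alpha)$---to a region $\Omega$ obtained as a nested intersection of the domains produced at each level by pulling $\mathcal{P}_n$ and its finitely many forward iterates back into the dynamical plane of $P_\alpha$. One then checks that $\overline{\Omega}$ is compactly contained in the ``attracting flower'' around $0$, and therefore stays a definite distance away from the $\beta$-fixed point: $\beta$ is the landing point of the zero external ray and lies on the boundary of the basin of infinity, a part of the plane that the renormalization coordinates---which only ever see a neighborhood of $0$ and its petals---never reach. Hence $\beta\notin P(P_\alpha)$.

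For parts~(b) and (c), assume the critical orbit of $P_\alpha$ is bounded and $0$ is linearizable, so that the Siegel disk $\Delta_\alpha$ exists. The tower then exhibits a Siegel disk of rotation number $\alpha_n\in\HT_{N_0}$ for each $\mathcal{R}^{n}(P_\alpha)$, and compactness of the Inou--Shishikura class forces these disks to have uniformly bounded geometry (in particular their boundaries stay uniformly far from collapsing into $0$). Transporting this control back through the coordinate changes, together with a puzzle-type decomposition of a neighborhood of $\partial\Delta_\alpha$ into preimages of the petals, I would show that $\partial\Delta_\alpha$ is locally connected and has no cut points, hence is a Jordan curve; this is part~(b). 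For part~(c) one first arranges, by letting the continued-fraction entries of $\alpha$ grow fast enough (still within $\HT_{N_0}$), that the critical point does not lie on $\partial\Delta_\alpha$; such $\alpha$ exist by Ch\'{e}ritat's construction. Then the post-critical set consists of the Jordan curve $\partial\Delta_\alpha$ together with the countably many arcs (``hairs'') swept out by the orbit of the critical value, each of them meeting $\partial\Delta_\alpha$ only from outside $\overline{\Delta}_\alpha$; this is the description of $P(P_\alpha)$ as a one-side hairy circle. The size bounds on the petals $\mathcal{P}_n$ make these hairs so short that they never enter $\bigcup_{k\geq1}P_\alpha^{-k}(\overline{\Delta}_\alpha)\setminus\overline{\Delta}_\alpha$, which gives the displayed inclusion.

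The main obstacle is the quantitative heart of all of this: establishing the Inou--Shishikura a priori bounds---compactness of the renormalization class, its invariance under $\mathcal{R}$ once $N_0$ is large, and the explicit estimate of how small the petal $\mathcal{P}_n$, and hence each post-critical hair, is at every level. These are the delicate estimates on the repelling and perturbed Fatou coordinates carried out in \cite{IS08} and sharpened in \cite{Ch17} and \cite{SY18}. A second delicate point, needed in part~(b), is the ``blow-up'' argument that converts the renormalization-level geometry into genuine statements about the curve $\partial\Delta_\alpha$ sitting in the dynamical plane of $P_\alpha$ itself.
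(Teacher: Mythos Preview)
The paper does not prove this theorem at all: Theorem~\ref{thm:hairy-cycle} is stated as a compilation of results quoted from \cite{IS08}, \cite{Ch17}, and \cite{SY18}, with the citations attached directly to each item, and the only accompanying text is a one-sentence informal description of what a one-side hairy circle looks like. So there is no ``paper's own proof'' to compare against; the authors simply import these facts as black boxes for use in the proofs of Theorems~\ref{thm-area} and~\ref{thm-area-0-dim-2}.

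That said, your outline is a faithful sketch of the strategy actually carried out in those references. The Inou--Shishikura invariant class and the near-parabolic renormalization tower are indeed the engine behind all three statements; part~(a) is a direct consequence of the post-critical set being trapped in the renormalization domains near~$0$, while parts~(b) and~(c) require the much harder analysis in \cite{Ch17} and \cite{SY18} of how the successive petal structures fit together at the boundary of the Siegel disk. Your identification of the two genuinely hard steps---the a~priori bounds on the perturbed Fatou coordinates, and the transfer of renormalization-scale geometry back to the original dynamical plane---is accurate. One small correction: in part~(c) the disjointness $P(P_\alpha)\cap\big(\bigcup_{k\ge1}P_\alpha^{-k}(\overline{\Delta}_\alpha)\setminus\overline{\Delta}_\alpha\big)=\emptyset$ is not really about the hairs being ``short''; it follows because the hairy-circle structure forces the post-critical set, outside $\overline{\Delta}_\alpha$, to accumulate only on $\partial\Delta_\alpha$ from the exterior side, whereas each component of $P_\alpha^{-k}(\overline{\Delta}_\alpha)\setminus\overline{\Delta}_\alpha$ is a closed Jordan domain disjoint from $\overline{\Delta}_\alpha$ (since the critical point is not on $\partial\Delta_\alpha$, preimage boundaries do not touch $\partial\Delta_\alpha$).
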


For the precise definition of one-side hairy circles, one may refer to \cite{Ch17}. Roughly speaking, each one-side hairy circle is a compact set consisting of a Jordan curve on which it attaches uncountably many Jordan arcs, and each Jordan arc is accumulated by at least two sequences of Jordan arcs from both sides.

\section{Carpet Julia sets with positive area and full dimension}

In this section, we will prove Theorems \ref{thm-area} and \ref{thm-area-0-dim-2}. In order to prove that the Julia set $J(f)$ of a rational map $f$ is a Sierpi\'{n}ski carpet, according to \cite{Wh58} one needs to verify the following five conditions:
\begin{itemize}
\item $J(f)$ is compact;
\item $J(f)$ is connected;
\item $J(f)$ is locally connected;
\item $J(f)$ has empty interior; and
\item The Fatou components of $f$ are bounded by pairwise disjoint simple closed curves.
\end{itemize}

Note that the Julia sets of all rational maps are compact and have empty interior provided they are not the whole Riemann sphere. In this section we consider the McMullen maps $f_\lambda$ defined in \eqref{equ:McM-2} with $\lambda\in\MM$. In this case, the Julia set of $f_\lambda$ is not the whole Riemann sphere (since $\infty$ is a super-attracting fixed point), always connected (see Theorem \ref{thm:Escape-3}) and $f_\lambda$ is renormalizable (see Lemma \ref{lem:copy}). Therefore, in order to show that $J(f_\lambda)$ is a Sierpi\'{n}ski carpet for some $\lambda\in\MM$, it is sufficient to prove that $J(f_\lambda)$ is locally connected and that all the Fatou components of $f_\lambda$ are bounded by pairwise disjoint simple closed curves.

\vskip0.1cm
For the local connectivity of connected compact sets in $\EC$, the following criterion is very useful.

\begin{lema}[{Criterion of local connectivity, \cite[Theorem 4.4, pp.\,112-113]{Wh42}}]\label{loc}
A connected compact set $X\subset \EC$ is locally connected if and only if it satisfies the following two conditions:
\begin{itemize}
 \item The boundary of each component of $\EC \setminus X$ is locally connected; and
 \item For any $\varepsilon>0$, there are only a finite number of components of $\EC\setminus X$ with spherical diameter larger than $\varepsilon$.
\end{itemize}
\end{lema}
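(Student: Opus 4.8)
This is a classical result of continuum theory; I will only outline a proof. The plan is to reduce both implications to the standard reformulation of local connectivity for compact connected metric spaces as \emph{uniform local connectivity}: a nondegenerate continuum $X$ (we may assume $X\neq\EC$, the remaining cases being trivial) is locally connected if and only if for every $\varepsilon>0$ there is $\delta>0$ such that any two points of $X$ at spherical distance less than $\delta$ lie in a connected subset --- indeed an arc --- of $X$ of diameter less than $\varepsilon$. I would first record this equivalence (the nonobvious direction via a Lebesgue-number argument applied to a finite cover of $X$ by connected relatively open sets of small diameter), together with the planar fact that every component $D$ of $\EC\setminus X$ is simply connected (its complement $\EC\setminus D$ is connected, whence $D$ is simply connected by Alexander duality), so that a Riemann map $\varphi\colon\D\to D$ exists and $\partial D=\bigcap_{0<r<1}\overline{\varphi(\{r<|z|<1\})}$ is a decreasing intersection of continua, hence a subcontinuum of $X$.

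For the sufficiency direction, assume (i) and (ii) and fix $\varepsilon>0$. By (ii) there are only finitely many complementary components $D_1,\dots,D_k$ of diameter at least $\varepsilon/5$; by (i) every $\partial D$ is a Peano continuum, hence arcwise connected, and the finitely many $\partial D_1,\dots,\partial D_k$ admit a common modulus $\eta>0$ of uniform local connectivity; put $\delta=\min\{\eta,\varepsilon/5\}$. Given $x,y\in X$ with $d(x,y)<\delta$, I would join them by the short spherical geodesic $\gamma$ and ``push $\gamma$ into $X$'': the open set $\gamma\setminus X$ is a countable union of maximal open subarcs, each contained in one component $D$ with endpoints on $\partial D$, and one replaces such a subarc by an arc in $\partial D\subset X$ joining its endpoints --- of diameter at most $\diam\partial D<\varepsilon/5$ when $D$ is small, and of diameter less than $\varepsilon/5$ when $D\in\{D_1,\dots,D_k\}$ (using that the endpoints are $\eta$-close on $\partial D_j$). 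The resulting set is a continuous image of $\gamma$ (continuity at a point of $\gamma\cap X$ that is a limit of replaced subarcs uses that the replacement arcs shrink: those lying in a fixed $\partial D$ shrink by its uniform local connectivity from (i), and by (ii) only finitely many of the boundaries involved have diameter bounded below), it contains $x$ and $y$, and it has diameter less than $\varepsilon$; this establishes uniform local connectivity of $X$.

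For the necessity direction, assume $X$ is locally connected. Condition (i) is the Torhorst theorem: to see that $\partial D$ is locally connected, one joins two nearby points $p,q\in\partial D$ first by a short connected subset $C\subset X$ (uniform local connectivity of $X$), then uses a cross-cut of the simply connected domain $D$ near $p$ and $q$ together with $C$ to bound a region of small diameter whose intersection with $\partial D$ is connected and joins $p$ to $q$; this is where the planar topology (cross-cuts and the Jordan curve theorem) genuinely enters. For condition (ii), I would argue by contradiction: if infinitely many components $D_n$ had diameter at least some $\varepsilon_0>0$, choose $z_n,w_n\in D_n$ with $d(z_n,w_n)\ge\varepsilon_0/2$, pass to convergent subsequences $z_n\to z$, $w_n\to w$ (so $d(z,w)\ge\varepsilon_0/2$, and $z,w\in X$ because the $D_n$ are pairwise disjoint), and derive a contradiction with uniform local connectivity of $X$ at $z$: the disjoint large domains accumulating at $z$, whose boundaries are subcontinua of $X$ by (i), force two points of $X$ arbitrarily close together that are joined inside $X$ only by sets of diameter at least $\varepsilon_0/4$.

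The hard part is the Torhorst step, i.e. condition (i) in the necessity direction: transporting the uniform local connectivity of the ambient continuum $X$ to its typically much thinner subset $\partial D$ genuinely uses the planar separation structure of $\EC$ (simple connectivity of $D$, cross-cuts, the Jordan curve theorem), and keeping control of diameters there is delicate. The compactness argument for (ii) is easier but still leans on (i). In the sufficiency direction the only subtle point is the continuity of the ``pushed'' path, which again rests on (i) together with the null-sequence property (ii).
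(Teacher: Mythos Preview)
The paper does not prove this lemma at all: it is stated with a citation to Whyburn's \textit{Analytic Topology} \cite[Theorem 4.4, pp.\,112--113]{Wh42} and then used as a black box in the proofs of Theorems~\ref{thm-area} and~\ref{thm-area-0-dim-2}. So there is no ``paper's own proof'' to compare against; you have supplied an argument where the authors deliberately supplied none.

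That said, your outline is a reasonable sketch of the classical route (uniform local connectivity, the Torhorst theorem for the necessity of (i), and the ``push the geodesic into $X$'' construction for sufficiency). Two places are thinner than the rest. First, in the sufficiency direction, the continuity of the pushed path at an accumulation point of replaced subarcs needs more care than you indicate: infinitely many of the subarcs you replace may land in the \emph{same} large component $D_j$, with endpoints that are not close to each other, so the blanket claim that ``replacement arcs shrink'' needs the observation that the endpoints of the replaced subarcs themselves converge along $\gamma$, and then the uniform local connectivity of $\partial D_j$ gives the shrinking. Second, your contradiction for (ii) in the necessity direction is vague at the key moment: from large disjoint domains $D_n$ accumulating at $z$ you need to actually produce, for every $\delta>0$, two points of $X$ within $\delta$ of each other that cannot be joined in $X$ by a set of small diameter. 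The clean way is to note that $\partial D_n$ separates $D_n$ from the rest of $\EC$, pick $p_n\in\partial D_n$ near $z$ and $q_n\in\partial D_n$ at distance $\geq \varepsilon_0/3$ from $z$ (possible since $\diam \overline{D}_n=\diam\partial D_n\geq\varepsilon_0$ for simply connected $D_n\subsetneq\EC$ with connected complement), and observe that any connected set in $X$ containing two of the $p_n$'s and avoiding the corresponding $q_n$'s would have to wind around the large domain $D_n$. As written, ``force two points of $X$ arbitrarily close together that are joined inside $X$ only by sets of diameter at least $\varepsilon_0/4$'' asserts the conclusion without the mechanism.
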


We will use the following lemma to control the diameters of Fatou components.

\begin{lema}[{Shrinking lemma, \cite{TY96}, \cite{LM97}}]\label{shrinking lemma}
Let $f:\EC \to \EC$ be a rational map and $D$ a topological disk whose closure $\overline{D}$ has no intersection with the post-critical set $P(f)$. Then, either $\overline{D}$ is contained in a Siegel disk or a Herman ring or for any $\varepsilon>0$ there are at most finitely many iterated preimages of $D$ with spherical diameter larger than $\varepsilon$.
\end{lema}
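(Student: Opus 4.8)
The plan is to run a hyperbolic-contraction argument on the complement of the post-critical set, in the spirit of the standard shrinking lemmas of Tan--Yin and Lyubich--Minsky. First I would set $\Omega = \EC \setminus \overline{P(f)}$ (after discarding the exceptional case where $\EC\setminus P(f)$ has at most two points, which forces $f$ to be postcritically finite of a trivial sort and can be treated separately, or simply excluded since then $J(f)$ carries no interesting disk $D$). Since $f(P(f))\subset P(f)$, the map $f$ restricts to a holomorphic self-map $f\colon \Omega' \to \Omega$, where $\Omega' = f^{-1}(\Omega)\subset\Omega$, and $\Omega'$ is a (possibly disconnected) open subset of $\Omega$. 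Each component of $\Omega$ carries its own hyperbolic metric $\rho_\Omega$; because the inclusion $\Omega'\hookrightarrow\Omega$ is a holomorphic injection that is \emph{not} a covering onto any component (this is where one must be slightly careful: it can be a covering only in the presence of a Herman ring or an annular component mapped to itself, which is the escape clause in the statement), the Schwarz--Pick inequality gives that $f\colon\Omega'\to\Omega$ is a local contraction of the hyperbolic metrics, and strictly so away from the exceptional annular situation.

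The key steps, in order, would be: (i) given $\varepsilon>0$, observe that since $\overline{D}\cap P(f)=\emptyset$ and $\overline{D}$ is compact, $D$ together with a neighbourhood lies in $\Omega$ with $\overline D$ contained in finitely many components of $\Omega$, and the hyperbolic diameter $\mathrm{diam}_{\rho_\Omega}(D)$ is finite; (ii) every iterated preimage component $D_n$ of $D$ under $f^{\circ n}$ either lies in $\Omega$, in which case $\mathrm{diam}_{\rho_\Omega}(D_n)\le \mathrm{diam}_{\rho_\Omega}(D)$ by the (non-strict) Schwarz--Pick inequality applied $n$ times, or else $D_n$ meets $P(f)$; (iii) in the first case, use a Koebe-type / comparison estimate between the hyperbolic metric of $\Omega$ and the spherical metric: on the region where the spherical diameter of $D_n$ exceeds $\varepsilon$, the density $\rho_\Omega$ is bounded below away from $0$ (away from the finitely many cusps/thin parts of $\Omega$), so spherically large preimages have hyperbolic diameter bounded below by some $\delta(\varepsilon)>0$; (iv) combine with strict contraction to conclude that once $n$ is large, $\mathrm{diam}_{\rho_\Omega}(D_n)<\delta(\varepsilon)$, hence $D_n$ is spherically small; this leaves only finitely many $n$, and for each such $n$ only finitely many preimage components. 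The remaining case — preimage components meeting $P(f)$ — is handled by noting that the components of $\EC\setminus f^{-\circ n}(\overline P(f))$ containing such a $D_n$ are eventually small by a separate compactness argument, or by pulling $D$ slightly to avoid $P(f)$ entirely; in the genuinely problematic situation the component of $\Omega$ is an annulus on which some iterate of $f$ acts as a covering, and $\overline D$ in that annulus is then trapped inside a Siegel disk or Herman ring, which is exactly the stated exception.

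The main obstacle I expect is step (iii)--(iv): making the comparison between the hyperbolic metric of the (possibly disconnected, possibly with many thin parts) domain $\Omega=\EC\setminus\overline{P(f)}$ and the spherical metric quantitative enough to turn ``hyperbolic diameter eventually small'' into ``spherical diameter eventually $<\varepsilon$'' uniformly. The subtlety is that near $\overline{P(f)}$ the hyperbolic density blows up, so a preimage $D_n$ could in principle be spherically large while hugging $\overline{P(f)}$ and having small hyperbolic size; one rules this out because $\overline D$ has a definite spherical distance from $P(f)$, and any univalent (or bounded-degree) pullback that lands a large spherical blob near $\overline{P(f)}$ would have to stretch the hyperbolic metric, contradicting contraction — but organizing this cleanly, and isolating precisely the annulus-covering case that produces the Siegel-disk/Herman-ring exception, is the delicate part. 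Everything else (finiteness of components at each level, the Schwarz--Pick inequalities) is routine once the metric comparison is in place; since the statement is quoted from \cite{TY96} and \cite{LM97}, I would in practice cite those for the technical estimate and only sketch the structure above.
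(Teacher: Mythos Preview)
The paper does not supply a proof of this lemma at all: it is stated with attribution to \cite{TY96} and \cite{LM97} and used as a black box. Your sketch is essentially the standard hyperbolic-contraction argument underlying those references, so there is nothing to compare against here; in practice you should simply cite the sources as the paper does, and if you wish to include a sketch, the one you wrote is along the right lines (the Schwarz--Pick contraction on $\EC\setminus P(f)$ together with the identification of the annular/rotation-domain exception).
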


Based on the preceding preparations, now we can give the proof of Theorem \ref{thm-area}.

\begin{proof}[{Proof of Theorem \ref{thm-area}}]
We will give a detailed proof of the existence of $f_\lambda$ such that
\begin{itemize}
\item $J(f_\lambda)$ is a Sierpi\'{n}ski carpet with positive area; and
\item $f_\lambda$ contains a Siegel disk.
\end{itemize}
The proof of the existence of $f_\lambda$ such that $J(f_\lambda)$ is a Sierpi\'{n}ski carpet with positive area and $f_\lambda$ has a Cremer fixed point or is infinitely renormalizable will be only given a sketch.

\vskip0.1cm
\textbf{Step 1.} \textit{The renormalizable Siegel parameters.} Recall that $\MM$ is a copy of the Mandelbrot set embedded in the non-escape locus of $f_\lambda$ (see Lemma \ref{lem:copy}). There exists a parameter $\lambda_1\in\MM$ such that
\begin{itemize}
\item (by Lemma \ref{lem:copy}(a)) There exist two Jordan domains $U_1$ and $V_1$ avoiding the origin such that $\widetilde{f}_1:=(f_1,U_1,V_1)$ is a quadratic-like mapping and contains a fixed Siegel disk $\Delta$, where $f_1:=f_{\lambda_1}$;
\item (by Theorem \ref{thm:hairy-cycle}(b)(c)) The rotation number of $\widetilde{f}_1$ in $\Delta$ is of some high type ($\geq N_0$) and the boundary $\partial\Delta$ is a Jordan curve avoiding any critical point of $f_1$;
\item (by Theorem \ref{thm:hairy-cycle}(c)) The post-critical set of $\widetilde{f}_1$ is a one-side hairy circle; and
\item (by Theorem \ref{thm:positive-area}) The small Julia set of $\widetilde{f}_1$ has positive area\footnote{Buff and Ch\'{e}ritat's construction of quadratic Julia sets with positive area requires that the rotation number is of high type. Moreover, the boundary of the Siegel disk of their construction does not contain the critical point. Actually, it is not known if a quadratic Julia set can have positive area and also contain the boundary of a Siegel disk passing through the critical point.}.
\end{itemize}

For any set $X$ in $\EC$, we denote by $-X:=\{z\in\EC:-z\in X\}$. Without loss of generality, we assume that $n\geq 3$ is odd in \eqref{equ:McM-2}. Then it is easy to see that $(f_1,-U_1,-V_1)$ is also a quadratic-like mapping containing a fixed Siegel disk $-\Delta$. In particular, the above four statements are still true for $(f_1,-U_1,-V_1)$. By the dynamical symmetry of $f_1$, the post-critical set of $f_1$ is
\begin{equation}\label{equ:two-hairy-circle}
P(f_1)=P(\widetilde{f}_1)\cup (-P(\widetilde{f}_1))\cup\{\infty\}
\end{equation}
and every Fatou component of $f_1$ is iterated eventually onto $\Delta$, $-\Delta$ or $B_{\lambda_1}$, where $B_{\lambda_1}$ is the immediate super-attracting basin of $\infty$. Therefore, the Fatou set of $f_1$ is
\begin{equation}\label{equ:Fatou}
F(f_1)=\bigcup_{k\in\N}f_1^{-k}(B_{\lambda_1}\cup\Delta\cup(-\Delta)).
\end{equation}

\textbf{Step 2.} \textit{$J(f_1)$ is locally connected}. Since $\lambda_1\in\MM$, the free critical orbits of $f_1$ are bounded. In particular, they are contained in the Julia set of $f_1$ since each point in $\partial\Delta$ is contained in the closure of some critical point. This means that the Fatou set of $f_1$ does not contain any free critical orbit. According to Theorem \ref{thm:QWY}, all the components of the preimages of $B_{\lambda_1}$ are Jordan domains. On the other hand, all the components of the preimages of $\Delta$ and $-\Delta$ are Jordan domains since $\Delta$ itself is (by the choice of $\lambda_1$ in Step 1). Therefore, all Fatou components of $f_1$ are Jordan domains and the boundary of each Fatou component of $f_1$ is locally connected.

By Lemma \ref{lem:copy}(b) and Theorem \ref{thm:hairy-cycle}(a), the post-critical set of $f_1$ has empty intersection with $\overline{T}_{\lambda_1}$, where $T_{\lambda_1}$ is the Fatou component of $f_1$ containing the origin. Note that $\overline{T}_{\lambda_1}$ is not contained in any Siegel disk or Herman ring. According to Lemma \ref{shrinking lemma}, for any $\varepsilon>0$ there are at most finitely many iterated preimages of $T_{\lambda_1}$ (and hence $B_{\lambda_1}$) with spherical diameter larger than $\varepsilon$. On the other hand, by \eqref{equ:two-hairy-circle}, the post-critical set of $f_1$ in $\C$ is the union of two symmetric hairy circles. Let $\Delta'$ be any connected component of $f_1^{-1}(\Delta\cup (-\Delta))$ which is different from $\Delta$ and $-\Delta$. By Theorem \ref{thm:hairy-cycle}(c), we have $P(f_1)\cap \overline{\Delta'}=\emptyset$. Note that $\overline{\Delta'}$ is not contained in any Siegel disk or Herman ring. By Lemma \ref{shrinking lemma}, for any $\varepsilon>0$ there are at most finitely many iterated preimages of $\Delta$ and $-\Delta$ with spherical diameter larger than $\varepsilon$. By \eqref{equ:Fatou}, it follows that for any $\varepsilon>0$ there are at most finitely many Fatou components of $f_1$ with spherical diameter larger than $\varepsilon$.
According to Lemma \ref{loc}, the Julia set of $f_1$ is locally connected.

\vskip0.1cm
\textbf{Step 3.} \textit{$J(f_1)$ is a Sierpi\'{n}ski carpet with positive area}.
Based on Steps 1 and 2, it is sufficient to show that the boundaries of all the Fatou components of $f_1$ are pairwise disjoint.
Note that the boundaries of any two different Fatou components in $\bigcup_{k\in\N}f_1^{-k}(\Delta\cup(-\Delta))$ are disjoint. Otherwise, it would imply that the boundary of the Siegel disk $\Delta$ contains a critical point, which contradicts the property of the parameter chosen in Step 1. On the other hand, we have $\partial B_{\lambda_1}\cap \partial T_{\lambda_1}=\emptyset$ since $P(f_1)\cap \partial B_{\lambda_1}=\emptyset$. This means that the boundaries of any two different Fatou components in $\bigcup_{k\in\N}f_1^{-k}(B_{\lambda_1})$ are disjoint. Finally, by Lemma \ref{lem:copy}(b) and Theorem \ref{thm:hairy-cycle}(a), we have $\partial \Delta\cap\partial B_{\lambda_1}=\emptyset$. Therefore, the boundaries of all the Fatou components of $f_1$ are pairwise disjoint. This means that we have verified the five conditions stated at the beginning of this section and $J(f_1)$ is a Sierpi\'{n}ski carpet having positive area and containing the boundary of a Siegel disk.

\vskip0.1cm
\textbf{Step 4.} \textit{The Cremer and infinitely renormalizable parameters.}
There exists a parameter $\lambda_2\in\MM$ such that
\begin{itemize}
\item (by Lemma \ref{lem:copy}(a)) There exist two Jordan domains $U_2$ and $V_2$ avoiding the origin such that $\widetilde{f}_2:=(f_2,U_2,V_2)$ is a quadratic-like mapping and hybrid equivalent to a quadratic polynomial $Q_{c_2}$, where $f_2:=f_{\lambda_2}$ and $Q_{c_2}$ has a Cremer fixed point or is infinitely renormalizable;
\item (by Lemma \ref{lem:copy}(b) and Theorem \ref{thm:hairy-cycle}(a)) The post-critical set of $\widetilde{f}_2$ is disjoint with $\overline{T}_{\lambda_2}$; and
\item (by Theorem \ref{thm:positive-area}) The small Julia set of $\widetilde{f}_2$ has empty interior and has positive area.
\end{itemize}

By the dynamical symmetry of $f_2$, we have $P(f_2)\cap\overline{T}_{\lambda_2}=\emptyset$. Note that the Fatou set of $f_2$ is
\begin{equation*}
F(f_2)=\bigcup_{k\in\N}f_2^{-k}(B_{\lambda_2}).
\end{equation*}
Since $f_2(T_{\lambda_2})=B_{\lambda_2}$ and $B_{\lambda_2}$ is a Jordan domain (Theorem \ref{thm:QWY}), it follows that all the Fatou components of $f_2$ are Jordan domains. According to the Shrinking lemma (Lemma \ref{shrinking lemma}) and the characterization of local connectivity (Lemma \ref{loc}), it follows that $J(f_2)$ is locally connected (similar to the arguments in Step 2). Finally, similar to the arguments in Step 3, the boundaries of any two different Fatou components in $\bigcup_{k\in\N}f_2^{-k}(B_{\lambda_2})$ are disjoint. This means that $f_2$ has a Sierpi\'{n}ski carpet Julia set with positive area and contains a Cremer fixed point or is infinitely renormalizable.
\end{proof}

We use the following lemma to prove that some Sierpi\'{n}ski carpet Julia sets have zero area but have Hausdorff dimension two.

\begin{lema}[{\cite[\S 3.3]{Mc94b}}]\label{Mc}
If $f$ is a rational map of degree $>1$, then
\begin{itemize}
 \item The Julia set $J(f)$ is equal to the whole Riemann sphere; or
 \item The spherical distance satisfies $d_{\EC}(f^{\circ k}(z),P(f))\to 0$ for \emph{almost} every $z$ in $J(f)$ as $k\to \infty$.
\end{itemize}
\end{lema}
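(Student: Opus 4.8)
The plan is to prove the contrapositive of the "moreover" alternative: assuming $J(f)$ is not the whole sphere, I will show that the set of points in $J(f)$ whose forward orbits do \emph{not} accumulate on $P(f)$ has measure zero. Let me write $A$ for this exceptional set, i.e. $A = \{z\in J(f) : \limsup_{k} d_{\EC}(f^{\circ k}(z), P(f)) > 0\}$. The set $A$ is the increasing union over $\delta>0$ (rational) of the sets $A_\delta = \{z \in J(f) : d_{\EC}(f^{\circ k}(z), P(f)) \ge \delta \text{ for infinitely many } k\}$, so it suffices to prove $\Area(A_\delta) = 0$ for each fixed $\delta > 0$. Since $\infty \notin P(f)$ can be arranged after a M\"obius change of coordinates (or one works with the spherical metric throughout), fix $\delta$ small enough that the $\delta$-neighborhood $N$ of $P(f)$ is a proper subset of $\EC$, and cover $\EC \setminus N$ by finitely many round disks $D_1, \dots, D_r$ of spherical radius $\delta/2$, each with $\overline{D_j} \cap P(f) = \emptyset$.

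The heart of the argument is a standard Koebe/normal-families estimate: because $\overline{D_j} \cap P(f) = \emptyset$, for any $n$ and any connected component $W$ of $f^{-n}(D_j)$, the map $f^{\circ n} : W \to D_j$ is a conformal isomorphism (no critical values of any $f^{\circ k}$, $k\le n$, lie in $D_j$), and by the Koebe distortion theorem — applied on the slightly larger disk $D_j'$ concentric with $D_j$, still disjoint from $P(f)$, over which the inverse branch is also univalent — the inverse branch has bounded distortion on $D_j$. Consequently there is a constant $\theta = \theta(D_j, D_j') \in (0,1)$, independent of $n$ and of the component $W$, such that
\begin{equation*}
\frac{\Area(W \cap f^{-n}(D_j^{0}))}{\Area(W)} \le \theta,
\end{equation*}
where $D_j^0 \subset D_j$ is any fixed round sub-disk of definite relative size (we only need that $D_j$, hence $D_j^0$, has positive measure and that $D_j^0$ sits well inside $D_j$). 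In words: every time the orbit is guaranteed to pass through $D_j$, a fixed proportion $1-\theta$ of the relevant phase space escapes. Now I feed this into a Borel--Cantelli / density argument. If $z \in A_\delta$, then $f^{\circ k}(z) \in D_{j(k)}$ for infinitely many $k$ and some $j(k) \in \{1,\dots,r\}$; by pigeonhole one index $j$ recurs infinitely often, so $A_\delta$ is contained in a finite union (over $j$) of sets of the form $\{z : f^{\circ k}(z) \in D_j \text{ infinitely often}\}$. Fix such a $j$ and set $B = \{z\in J(f) : f^{\circ k}(z)\in D_j \text{ for infinitely many }k\}$. Suppose $\Area(B) > 0$; then $B$ has a Lebesgue density point $z_0 \in J(f)$. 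Pull back a tiny round disk centered at $z_0$ through the (mostly) univalent branches of $f$ — the branch points cause no trouble because, in the spherical metric, the measure is governed by the same distortion estimates away from critical values, and one can shrink to avoid the finitely many relevant critical values — to manufacture, at arbitrarily large times $n_i \to \infty$ along a subsequence realizing the recurrence to $D_j$, univalent inverse branches $\psi_i$ of $f^{\circ n_i}$ carrying $D_j$ back near $z_0$ with uniformly bounded distortion; the density-point property forces $\Area(\psi_i(D_j) \cap B)/\Area(\psi_i(D_j)) \to 1$. But the escape estimate above says this ratio is at most $1 - (1-\theta)\cdot(\text{fixed proportion}) < 1$ uniformly, since a definite chunk $\psi_i(D_j^0)$ is mapped by $f^{\circ (n_{i+1})}$ off $D_j$ and hence — by choosing $D_j^0$ to exhaust the escaping part appropriately — contributes points leaving $B$. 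Contradiction. Hence $\Area(B) = 0$, so $\Area(A_\delta) = 0$, and letting $\delta \downarrow 0$ along a sequence gives $\Area(A) = 0$, which is the claim.

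The main obstacle is getting the distortion/escape estimate uniform over \emph{all} inverse-branch components at \emph{all} iterates simultaneously: the natural Koebe bound degrades if the preimage disks $\psi_i(D_j)$ are not uniformly compactly contained in domains of univalence of the relevant branches. The clean fix, which is exactly McMullen's, is to choose the auxiliary disk $D_j'$ strictly larger than $D_j$ with $\overline{D_j'}\cap P(f)=\emptyset$ and to observe that \emph{every} iterated inverse branch of $f$ that is defined on $D_j$ is automatically defined and univalent on $D_j'$ (since $D_j'$ still meets no critical value of any $f^{\circ k}$, because it avoids $P(f) \supseteq \overline{\bigcup_{k\ge1} f^{\circ k}(\Crit f)}$). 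Univalence on the larger disk yields, via Koebe, a distortion constant depending only on the ratio $\diam D_j / \diam D_j'$ — not on $n$ or the branch — and that single observation makes the whole Borel--Cantelli mechanism go through. I would present this as: (i) reduce to $\Area(B)=0$ for a single recurrent index; (ii) state and prove the uniform Koebe escape lemma; (iii) run the density-point contradiction. Step (ii) is where the real content sits; steps (i) and (iii) are bookkeeping.
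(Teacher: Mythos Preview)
The paper does not give its own proof of this lemma; it is quoted verbatim from McMullen \cite[\S 3.3]{Mc94b} and used as a black box in the proof of Theorem~\ref{thm-area-0-dim-2}. So there is no in-paper argument to compare against, and I evaluate your attempt on its own terms (and against McMullen's original).

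Your overall architecture is the right one and is essentially McMullen's: cover $\EC\setminus N_\delta(P(f))$ by finitely many disks $D_j$ with $\overline{D_j'}\cap P(f)=\emptyset$ for a slightly larger concentric $D_j'$, observe that every inverse branch of every iterate is univalent on $D_j'$ and hence has Koebe-bounded distortion on $D_j$ independent of the iterate, and finish with a Lebesgue density-point contradiction. Steps (i) and (iii) in your outline are fine.

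The gap is in your step (ii), the ``escape'' mechanism. You introduce an auxiliary round sub-disk $D_j^0\subset D_j$ and assert that ``a definite chunk $\psi_i(D_j^0)$ is mapped by $f^{\circ n_{i+1}}$ off $D_j$ and hence \dots\ contributes points leaving $B$''. Nothing in your setup forces this: $D_j^0$ is an arbitrary sub-disk, and there is no reason its pullback should consist of points whose orbits eventually miss $D_j$, let alone leave $B$. The displayed inequality $\Area(W\cap f^{-n}(D_j^0))/\Area(W)\le\theta$ is true by Koebe, but it is the wrong direction to be useful (you need a \emph{lower} bound on the proportion of $W$ that lies \emph{outside} $B$).

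What is missing is the one place where the hypothesis $J(f)\neq\EC$ actually does work beyond the first sentence. If $J(f)\neq\EC$ then $J(f)$ has empty interior, so any $D_j$ that is visited by points of $B\subset J(f)$ satisfies $\Area(D_j\cap F(f))>0$; set $E_j:=D_j\cap F(f)$ and $c:=\Area(E_j)/\Area(D_j)>0$. By uniform Koebe distortion on the branches $\psi_i:D_j\to\psi_i(D_j)$ and complete invariance of the Fatou set, $\psi_i(E_j)\subset F(f)$ occupies a proportion at least $c'>0$ of $\psi_i(D_j)$, with $c'$ depending only on $c$ and the modulus of $D_j'\setminus\overline{D_j}$. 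Since $B\subset J(f)$, this yields $\Area(\psi_i(D_j)\cap B)/\Area(\psi_i(D_j))\le 1-c'<1$ uniformly in $i$, and \emph{that} is what contradicts $z_0$ being a density point of $B$. Replace your unspecified $D_j^0$ by $E_j=D_j\cap F(f)$ and the argument becomes correct and is exactly McMullen's.
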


\begin{proof}[{Proof of Theorem \ref{thm-area-0-dim-2}}]
The idea of the proof is similar to that of Theorem \ref{thm-area}.

\vskip0.1cm
\textbf{Step 1.} \textit{The suitable renormalizable parameters.}
There exists a parameter $\lambda_3\in\MM$ such that
\begin{itemize}
\item (by Lemma \ref{lem:copy}(a)) There exist two Jordan domains $U_3$ and $V_3$ avoiding the origin such that $\widetilde{f}_3:=(f_3^{\circ 2},U_3,V_3)$ is a quadratic-like mapping and hybrid equivalent to a quadratic polynomial $Q_{c_3}$, where $f_3:=f_{\lambda_3}$ and $Q_{c_3}$ is non-renormalizable\footnote{For the non-renormalizable quadratic polynomials in Theorem \ref{thm-non-renorm}, the post-critical sets may equal to the whole Julia sets. In order to guarantee that the post-critical set of $\widetilde{f}_3$ is disjoint with the $\beta$-fixed point, we consider the $2$-renormalization $(f_3^{\circ 2},U_3,V_3)$ but not $1$-renormalization.};
\item (by Lemma \ref{lem:copy}(b)) The post-critical set of $\widetilde{f}_3$ is disjoint with $\overline{T}_{\lambda_3}$; and
\item (by Theorem \ref{thm-non-renorm}) The Julia set $\widetilde{J}_3$ of $\widetilde{f}_3$ has empty interior, Hausdorff dimension two and zero area.
\end{itemize}

\textbf{Step 2.} \textit{$J(f_3)$ is a Sierpi\'{n}ski carpet with Hausdorff dimension two.}
By dynamical symmetry of $f_3$, we have $P(f_3)\cap\overline{T}_{\lambda_3}=\emptyset$. Since $B_{\lambda_3}$ is a Jordan domain (by Theorem \ref{thm:QWY}), it follows that all the Fatou components of $f_3$ are Jordan domains. Similarly, according to the Shrinking lemma and the characterization of local connectivity, it follows that $J(f_3)$ is locally connected. Finally, similar to the arguments in the proof of Theorem \ref{thm-area}, the boundaries of any two different Fatou components in $F(f_3)=\bigcup_{k\in\N}f_3^{-k}(B_{\lambda_3})$ are disjoint. This means that $f_3$ has a Sierpi\'{n}ski carpet Julia set with Hausdorff dimension two.

\vskip0.1cm
\textbf{Step 3.} \textit{The area of $J(f_3)$ is zero}. Without loss of generality, we assume that $n\geq 3$ is odd in \eqref{equ:McM-2}. Then $(f_3^{\circ 2},-U_3,-V_3)$ is also a quadratic-like mapping and hybrid equivalent to the quadratic polynomial $Q_{c_3}$. We consider the following two disjoint subsets of $J(f_3)$:
\begin{equation*}
\begin{split}
\mathcal{J}_1:=&\,\{z\in J(f_3):\exists\, k\geq 0 \text{ such that }f_3^{\circ k}(z)\in \widetilde{J}_3\cup (-\widetilde{J}_3)\}; \text{ and}\\
\mathcal{J}_2:=&\,J(f_3)\setminus \mathcal{J}_1.
\end{split}
\end{equation*}
By the choice of renormalization parameter, it follows that $\Area(\mathcal{J}_1)=0$.
Note that the Julia set of $f_3$ is not the whole Riemann sphere. By Lemma \ref{Mc}, there exists a subset $J'$ of $J(f_3)$ such that $\Area(J')=0$ and for all $z\in J(f_3)\setminus J'$ we have $d_{\EC}(f_3^{\circ k}(z),P(f_3))\to 0$ as $k\to \infty$. 

\vskip0.1cm
For any $z\in\mathcal{J}_2$, there are following two possibilities: 
\begin{enumerate}
\item For all $k\in\N$, $f_3^{\circ k}(z)\not\in U_3\setminus\mathcal{J}_1$ and $f_3^{\circ k}(z)\not\in (-U_3)\setminus\mathcal{J}_1$; or 
\item There exists $k\in\N$ such that $f_3^{\circ k}(z)\in U_3\setminus\mathcal{J}_1$ or $f_3^{\circ k}(z)\in (-U_3)\setminus\mathcal{J}_1$.
\end{enumerate}
Note that $P(f_3)\setminus\{\infty\}\subset \widetilde{J}_3\cup (-\widetilde{J}_3)\subset U_3\cup(-U_3)$. For Case (a), we have $z\in J'$ by definition. For Case (b), there exists $k'>k$ such that $f_3^{\circ k'}(z)\in V_3\setminus U_3$ or $f_3^{\circ k'}(z)\in (-V_3)\setminus (-U_3)$. We still have $z\in J'$. This means that $\mathcal{J}_2\subset J'$ and hence we have $\Area(\mathcal{J}_2)=0$. This proves that $\Area(J(f_3))=0$.
\end{proof}

\section{Carpet Julia sets with arbitrary dimensions}

In this section we will give the proof of Theorem \ref{thm-dim}.
From Theorem \ref{thm:Escape-3}(c), for every $\lambda$ in the Sierpi\'{n}ski holes of $\MH_k$ with $k\geq 3$, $J(f_\lambda)$ is a hyperbolic Sierpi\'{n}ski carpet Julia set. We need the following theorem to obtain the lower bound of the Hausdorff dimension of Sierpi\'{n}ski carpet Julia sets.

\begin{thm}[{\cite[Theorem C]{BW15}}] \label{thm:lower-dim}
For each $n\geq 3$, there exists a parameter $\lambda_n$ in the Sierpi\'{n}ski holes such that the Hausdorff dimension of the Julia set of $f_{\lambda_n}$ satisfies
\begin{equation*}
1\le \dim_{H}J(f_{\lambda_n})\le 1+\frac{10}{\log n}.
\end{equation*}
\end{thm}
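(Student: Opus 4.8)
The plan is to treat the two inequalities separately. For the lower bound, if $\lambda_n$ lies in a Sierpi\'{n}ski hole then by Theorem~\ref{thm:Escape-3}(c) the set $J(f_{\lambda_n})$ is a Sierpi\'{n}ski carpet, hence a connected compact set with more than one point, and every such set has Hausdorff dimension at least one; thus $\dim_{H}J(f_{\lambda_n})\ge 1$ for \emph{every} choice of $\lambda_n$ in the Sierpi\'{n}ski holes. For the upper bound the first observation is that it is non-trivial only for large $n$: since $\dim_{H}J(f_{\lambda_n})\le 2$ always and $1+10/\log n\ge 2$ whenever $\log n\le 10$, the asserted inequality holds automatically for all $3\le n\le e^{10}$, whatever $\lambda_n$ may be. Hence one may take $n$ as large as needed and argue crudely.

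For large $n$ I would select $\lambda_n$ in a Sierpi\'{n}ski hole of $\MH_k$ with $k$ small, near the center of that hole, so that $|\lambda_n|$ stays in a fixed compact subset of $\C^*$ (bounded away from $0$, $\infty$ and from $1/4$), the free critical orbit reaches the trap door $T_{\lambda_n}$ after a bounded number of steps, and the post-critical set of $f_{\lambda_n}$ stays a definite modulus away from $\partial T_{\lambda_n}$ and $\partial B_{\lambda_n}$ (the analogue of Lemma~\ref{lem:copy}(b)). Such $\lambda_n$ make $f_{\lambda_n}$ hyperbolic, so it is enough, by the standard distortion/covering argument for hyperbolic rational maps (equivalently Bowen's formula), to find $t_n$ for which $\sum_{f_{\lambda_n}^{\circ j}(z)=z_0}|(f_{\lambda_n}^{\circ j})'(z)|^{-t_n}$ stays bounded in $j$; then $\dim_{H}J(f_{\lambda_n})\le t_n$.

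The geometric input comes from $f_\lambda'(z)=nz^{-n-1}(z^{2n}-\lambda)$. For the chosen $\lambda_n$ the critical circle $\{|z|=|\lambda_n|^{1/(2n)}\}$, $\partial T_{\lambda_n}$ and $\partial B_{\lambda_n}$ all converge to the unit circle as $n\to\infty$, so $J(f_{\lambda_n})$ is confined to a thin round annulus about $\{|z|=1\}$; there $|f_{\lambda_n}'|\asymp n$ away from the $2n$ free critical points $c_i$, while near $c_i$ the map is locally two-to-one with $|f_{\lambda_n}'(z)|\asymp n^{2}|z-c_i|$. Taking the initial finite cover of $J(f_{\lambda_n})$ by disks of a fixed small radius whose closures miss the post-critical set, one pulls back along inverse branches: by the Koebe distortion theorem, with constant independent of $n$ because the choice of $\lambda_n$ keeps a definite modulus between the pullback disks and $P(f_{\lambda_n})$, each branch in the ``bulk'' is comparable to an affine contraction of ratio $\lesssim 1/n$, whereas the branches grazing the points $c_i$ are governed by the local quadratic model and are controlled by telescoping them with the $\asymp k$ neighboring iterates (each expanding by $\asymp n$) that the critical orbit must take to reach $T_{\lambda_n}$ and then $B_{\lambda_n}$; moreover, since the critical circle is mapped far from itself, such grazing steps are non-consecutive along any orbit. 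Pulling back $j$ times therefore yields $\lesssim (2n)^{j}$ sets of diameter $\lesssim (\mathrm{const}/n)^{j}$, so $\sum\diam^{t}\lesssim\big(2\,C^{t}n^{1-t}\big)^{j}$, and for $t=t_n=1+10/\log n$ the base is $<1$ once $n$ is large (the exponent $10$ is generous precisely because the distortion constant $C$ can be kept $O(1)$). Combined with the small-$n$ remark, this gives $\dim_{H}J(f_{\lambda_n})\le 1+10/\log n$ for all $n\ge 3$.

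The main obstacle is exactly this uniform-in-$n$ control of the inverse branches that pass near the $2n$ critical points: one must pin down the local quadratic model and the estimate $|f_{\lambda_n}'(z)|\asymp n^{2}|z-c_i|$, bound the diameters of the Fatou components containing the $c_i$ (which shrink like a negative power of $n$), and show that telescoping a grazing step with its bounded block of genuinely expanding neighbors still produces net contraction, all with constants that do not deteriorate with $n$. Granting this technical core, the passage from the pullback estimate to $\dim_{H}J(f_{\lambda_n})\le t_n$ via hyperbolicity is routine, and the slack built into the constant $10$ bridges the range between ``$n$ sufficiently large'' and ``$n>e^{10}$''.
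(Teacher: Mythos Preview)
The paper does not prove this statement at all: Theorem~\ref{thm:lower-dim} is quoted verbatim from \cite[Theorem~C]{BW15}, and the only comment the authors add is the single sentence ``The proof of Theorem~\ref{thm:lower-dim} is based on Bowen's formula. For more details on the calculation, see \cite[\S 7]{BW15}.'' There is therefore no ``paper's own proof'' to compare your proposal against.

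That said, your sketch is in the right spirit and matches the one-line hint the authors give. The heuristic you identify---$2n$ inverse branches, each contracting by a factor comparable to $1/n$ on the thin annulus near the unit circle, so that the pressure vanishes near $t=\log(2n)/\log n=1+\log 2/\log n$---is exactly the mechanism behind the bound, and the slack between $\log 2$ and $10$ is what absorbs the distortion constants and the finitely many small values of $n$. Your observation that the inequality is vacuous for $n\le e^{10}$ is also correct and useful. The part you flag as the ``main obstacle'' (uniform control of inverse branches grazing the $2n$ free critical points, and the telescoping with neighboring expanding steps) is precisely where the actual work in \cite{BW15} lies; your description of how it should go is plausible but is a sketch rather than a proof, and the details---in particular the choice of $\lambda_n$ in a specific Sierpi\'{n}ski hole and the quantitative location of $J(f_{\lambda_n})$ relative to the critical circle---require the explicit computations carried out in \cite[\S 7]{BW15}. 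If your goal is to supply a self-contained argument here, you would need to fill in those estimates; if your goal is merely to use the result, a citation suffices, which is what the present paper does.
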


Note that $f_{\lambda_n}$ is hyperbolic and the Julia set of $f_{\lambda_n}$ is a Sierpi\'{n}ski carpet. The proof of Theorem \ref{thm:lower-dim} is based on Bowen's formula. For more details on the calculation, see \cite[\S 7]{BW15}.

\vskip0.1cm
The following result is very useful in the calculation of the upper bound of the Hausdorff dimension of the Julia sets of the rational maps in some special hyperbolic components.

\begin{thm}[{\cite[Theorem 6.2]{QY18}}]\label{thm:upper-dim}
Let $\MH$ be a hyperbolic component in the space of rational maps of fixed degree $d\geq 2$. If $f_0\in\MH$ has a simply connected periodic Fatou component whose closure is disjoint with any other Fatou components, then
\begin{equation*}
\sup_{f\in\MH}\dim_H (J(f))=2.
\end{equation*}
\end{thm}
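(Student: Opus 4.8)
\emph{Sketch of a proof.} Denote by $\Omega$ the given simply connected periodic Fatou component, of period $p$. Since $\dim_H J(f)\le2$ for every rational map of degree at least $2$, it suffices to produce, for each $\varepsilon>0$, some $f\in\MH$ with $\dim_H J(f)>2-\varepsilon$. The plan is to move $f_0$ inside $\MH$ towards a parabolic parameter on $\partial\MH$ at which $\Omega$ degenerates to a parabolic basin, and then to exploit parabolic implosion: suitable nearby maps in $\MH$ will have Julia sets containing conformally expanding Cantor sets of Hausdorff dimension arbitrarily close to $2$.

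\textbf{Step 1: from $f_0$ to a parabolic boundary map.} As $f_0$ is hyperbolic (so $\Omega$ is not a Siegel disk) and $\Omega$ is simply connected, the first-return map $R=f_0^{\circ p}\colon\Omega\to\Omega$ is conformally conjugate to a finite Blaschke product $B_0$ of $\D$ of degree $d'\ge2$ with an attracting or super-attracting fixed point. I would perturb $f_0$ so that this fixed point migrates to $\partial\D$ with multiplier tending to $1$: if the cycle is attracting this is achieved by moving in $\MH$ and letting the multiplier $\rho$ of the cycle tend to $1$ along a path in $\D$; if it is super-attracting, by a quasiconformal surgery supported on $\Omega$ and its preimages that replaces $B_0$ by degree-$d'$ Blaschke products whose attracting fixed point tends to $\partial\D$. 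Either way one reaches, along a path inside $\MH$, a map $f_*\in\partial\MH$ with a parabolic cycle of period $p$, multiplier $1$ and a single attracting petal, whose immediate parabolic basin $\Omega_*$ is the Carath\'eodory limit of $\Omega$. Here the hypothesis that $\overline\Omega$ is disjoint from the other Fatou components of $f_0$ — a property preserved by the surgery and in the limit — guarantees that $\overline{\Omega_*}$ meets no other Fatou component; equivalently the parabolic cycle of $f_*$ is \emph{exposed}, meaning that on the side of $\partial\Omega_*$ facing away from $\Omega_*$, along the repelling petal, one finds only Julia set. (This is exactly the feature that fails for $Q_c$ with $c$ in the main cardioid as $c\to1/4$, which is why those Julia sets do not acquire dimension $2$.)

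\textbf{Step 2: implosion inside $\MH$.} Next I would approach $f_*$ from inside $\MH$ along a sequence $f_n$ for which the multiplier of the cycle carrying $\Omega$ tends to $1$ along a parabolic-implosion sequence: writing it as $e^{2\pi\ii\tau_n}$ with $\tau_n$ of positive imaginary part, $\tau_n\to0$ and $-1/\tau_n$ converging modulo $\Z$. Then the iterates $f_n^{\circ p}$ converge, in the sense of parabolic implosion, to a Lavaurs map $g_\sigma$ built from the attracting and repelling \'Ecalle--Voronin coordinates of $f_*$ at its parabolic cycle. Although every $f_n$ still carries an attracting cycle, so that we stay in $\MH$, the exposure of the parabolic cycle forces the transition dynamics of $g_\sigma$ to act on genuine pieces of Julia set rather than being absorbed into an adjacent Fatou component. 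By Shishikura's construction at a simple parabolic point \cite{Sh98}, the phase $\sigma$ can be chosen so that $J(g_\sigma)$ contains a conformally expanding invariant set $\Lambda$ with $\dim_H\Lambda>2-\varepsilon/2$.

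\textbf{Step 3: transfer of dimension and conclusion.} Expanding sets persist and vary continuously under the convergence $f_n^{\circ p}\to g_\sigma$: for $n$ large, $J(f_n)$ contains an expanding set $\Lambda_n$ together with a conjugacy onto $\Lambda$ that is H\"older with exponent tending to $1$, whence $\dim_H J(f_n)\ge\dim_H\Lambda_n\to\dim_H\Lambda>2-\varepsilon/2$. So $\dim_H J(f_n)>2-\varepsilon$ for $n$ large with $f_n\in\MH$, which gives $\sup_{f\in\MH}\dim_H J(f)=2$. The main difficulty is the interplay of Steps 2 and 3: one must run parabolic implosion through parameters that keep the cycle attracting — rather than the classical perturbations that split the parabolic point into two repelling cycles and leave $\MH$ — check that a horocyclic approach of the multiplier still produces the full Lavaurs dynamics, and, most delicately, control the H\"older exponents of the conjugacies $\Lambda_n\to\Lambda$ so that $\dim_H\Lambda_n$ actually converges to $\dim_H\Lambda$ and does not merely stay bounded below. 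The hypotheses that $\Omega$ is simply connected and that $\overline\Omega$ is disjoint from the other Fatou components are used precisely to reduce to the standard one-petal implosion picture and to keep the exploded Julia set from being swallowed by an adjacent Fatou component.
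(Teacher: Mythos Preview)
The paper does not supply its own proof of this statement: Theorem \ref{thm:upper-dim} is quoted verbatim from \cite[Theorem 6.2]{QY18}, and the only comment the paper makes is that ``the proof of Theorem \ref{thm:upper-dim} is based on Shishikura's result of the Hausdorff dimension of the Julia sets under parabolic bifurcations \cite[Theorem 2]{Sh98}.'' Your sketch is therefore aligned with the paper's one-line indication: you push $f_0$ inside $\MH$ toward a parabolic boundary parameter and invoke Shishikura's implosion machinery to produce hyperbolic sets in $J(f)$ of dimension arbitrarily close to $2$. In that sense your approach and the paper's are the same.

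That said, the worries you flag in Step~2/3 are real and are precisely where the substance of \cite{QY18} lies. Shishikura's original argument in \cite{Sh98} produces high-dimension hyperbolic sets for perturbations that typically split the parabolic cycle into a pair of repelling cycles, i.e.\ for parameters \emph{outside} $\MH$; showing that one can instead run the implosion along a horocyclic approach with $|\rho|<1$ (so that one stays in $\MH$) and still recover the full Lavaurs dynamics with the requisite control on the conjugacies is not automatic from \cite{Sh98}. Your sketch correctly identifies the role of the hypothesis that $\overline{\Omega}$ is disjoint from the other Fatou components --- it guarantees that the repelling side of the parabolic point is genuinely in the Julia set, so the exploded dynamics is not absorbed --- but it stops short of actually carrying out the horocyclic implosion and the H\"older-exponent control. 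As written, your proposal is a reasonable outline consistent with the paper's citation, but it defers the hard analytic work to \cite{QY18} just as the paper itself does.
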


The proof of Theorem \ref{thm:upper-dim} is based on Shishikura's result of the Hausdorff dimension of the Julia sets under parabolic bifurcations \cite[Theorem 2]{Sh98}.

\begin{proof}[{Proof of Theorem \ref{thm-dim}}]
For each $n\geq 3$, let $\MS_n$ be the Sierpi\'{n}ski hole in the parameter space of the McMullen maps containing $\lambda_n$, where $\lambda_n$ is the parameter introduced in Theorem \ref{thm:lower-dim}. Note that the McMullen maps $f_\lambda$ defined in \eqref{equ:McM-2} are of degree $2n$. Let $\widehat{\MS}_n$ be the hyperbolic component in the space of rational maps of degree $2n$ containing the Sierpi\'{n}ski hole $\MS_n$. Then for any $f\in\widehat{\MS}_n$, the Julia set of $f$ is a Sierpi\'{n}ski carpet.

According to \cite{Ru82}, $f\mapsto \dim_H (J(f))$ is a real-analytic (and hence continuous) function as $f$ moves in $\widehat{\MS}_n$. By Theorem \ref{thm:upper-dim}, for each $s\in [1+10/\log n, 2)$, there exists a map $f\in\widehat{\MS}_n$ such that $\dim_H (J(f))=s$. Since $n\geq 3$ can be chosen such that it is arbitrarily large, the theorem follows.
\end{proof}

\section{Dimension of Sierpi\'{n}ski carpets and carpet Julia sets}\label{1-dimension}

In this section, we first construct some Sierpi\'{n}ski carpets (not Julia sets) with Hausdorff dimension one and prove Theorem \ref{thm:dim-1}. Then we show that the Sierpi\'{n}ski carpet Julia sets have Hausdorff dimension strictly larger than one except some special cases.

\subsection{Sierpi\'{n}ski carpets with dimension one}\label{subsec:dim-1}

We first recall the precise definition of Hausdorff dimension, which is needed in the calculation.
Let $X\subset\R^\ell$ with $\ell\geq 1$. For $\delta>0$, the collection of sets $\{U_i\}_{i=1}^\infty$ is called a \textit{$\delta$-cover} of $X$ if $X\subset\bigcup_{i=1}^\infty U_i$ and $0\leq \diam(U_i)\leq\delta$. For $s\geq 0$ we define
\begin{equation}\label{equ:H-delta-s}
\MH_\delta^s(X):=\inf\Big\{\sum_{i=1}^\infty \diam(U_i)^s:\{U_i\}_{i=1}^\infty \text{ is a } \delta\text{-cover of } X\Big\}.
\end{equation}
It is easy to see that the following limit exists:
\begin{equation*}
\MH^s(X):=\lim_{\delta\to 0}\MH_\delta^s(X).
\end{equation*}
The number $\MH^s(X)$ is called the $s$-dimensional \textit{Hausdorff measure} of $X$. The \textit{Hausdorff dimension} of $X$ is defined as (taking the supremum of the empty set to be $0$)
\begin{equation*}
\dim_H(X):=\inf\{s\geq 0:\MH^s(X)=0\}=\sup\{s:\MH^s(X)=\infty\}.
\end{equation*}

\begin{proof}[{Proof of Theorem \ref{thm:dim-1}}]
We start with a unit square $F_0=[0,1]\times[0,1]$. Let $k\geq 3$ be a given integer. Firstly, we remove a square with side length $1-\tfrac{2}{k}$ in the center of $F_0$ and obtain a compact set $F_1\subset F_0$. Note that $F_1$ consists of $4k-4$ squares $\{B_1^1,\cdots,B_1^{4k-4}\}$ with side length $1/k$ whose interiors are pairwise disjoint. Let $F_2\subset F_1$ be the compact set obtained by removing a square with side length $\tfrac{1}{k}(1-\tfrac{2}{k^2})$ in the center of each $B_1^i$ with $1\leq i\leq 4k-4$.
Then $F_2$ consists of $(4k-4)(4k^2-4)$ squares $\{B_2^1,\cdots,B_2^{(4k-4)(4k^2-4)}\}$ with side length $\tfrac{1}{k}\cdot\tfrac{1}{k^2}$ whose interiors are pairwise disjoint. Inductively, in the step $m\geq 1$, we obtain a compact set $F_m$ which consists of $b_m$ squares $\{B_m^1,\cdots,B_m^{b_m}\}$ with side length $l_m$ whose interiors are pairwise disjoint, where
\begin{equation*}
b_m=\prod_{i=1}^m(4k^i-4) \text{\quad and\quad} l_m=\prod_{i=1}^m\frac{1}{k^i}.
\end{equation*}
Let $F=\bigcap_{m=0}^\infty F_m$. Then $F$ is compact, connected, locally connected (by Lemma \ref{loc}), has empty interior and the boundaries of the complementary components are pairwise disjoint Jordan curves. Therefore, $F$ is a Sierpi\'{n}ski carpet (see Figure \ref{Fig:carpet-dim-1}).

\begin{figure}[!htpb]
  \setlength{\unitlength}{1mm}
  \centering
  \includegraphics[width=0.3\textwidth]{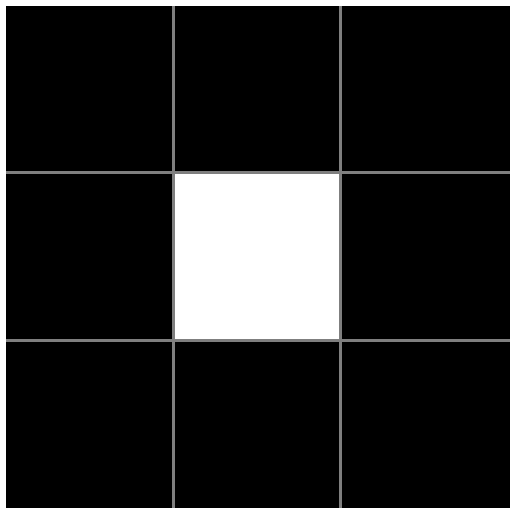}~~
  \includegraphics[width=0.3\textwidth]{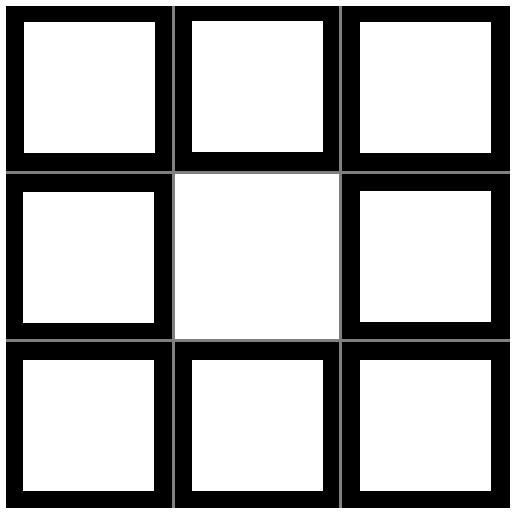}~~
  \includegraphics[width=0.3\textwidth]{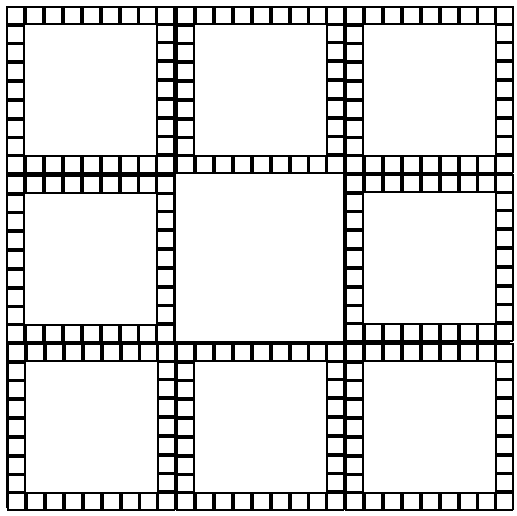}
  \caption{The sketch of constructing a Sierpi\'{n}ski carpet with Hausdorff dimension one. We remove more and more spaces in the squares in the inductive steps (This sketch corresponds to $k=3$ in the proof of Theorem \ref{thm:dim-1}).}
  \label{Fig:carpet-dim-1}
\end{figure}

We now prove $\dim_H(F)=1$ by definition. From the construction of $F$, for any $m\geq 1$, $\{B_m^1,\cdots,B_m^{b_m}\}$ is a $\sqrt{2}\,l_m$-cover of $F$. Let $\varepsilon\in (0,1]$ be any given number. From \eqref{equ:H-delta-s} we have
\begin{equation*}
\MH_{\sqrt{2}\,l_m}^{1+\varepsilon}(F)\leq (\sqrt{2}\,l_m)^{1+\varepsilon}\,b_m=(\sqrt{2})^{1+\varepsilon}\prod_{i=1}^m\frac{4(k^i-1)}{k^{(1+\varepsilon)i}}.
\end{equation*}
Therefore, we have
\begin{equation*}
\MH^{1+\varepsilon}(F)\leq \liminf_{m\to\infty}\,(\sqrt{2}\,l_m)^{1+\varepsilon}\,b_m\leq \liminf_{m\to\infty}\,2\prod_{i=1}^m\frac{4}{k^{\varepsilon i}}=0.
\end{equation*}
This means that $\dim_H(F)\leq 1+\varepsilon$. By the arbitrariness of $\varepsilon$, we have $\dim_H(F)\leq 1$. Since $F$ is a Sierpi\'{n}ski carpet, we have $\dim_H(F)=1$.
\end{proof}

\subsection{Carpet Julia sets with dimension larger than one}\label{subsec:dim-g-1}

We believe that all Sierpi\'{n}ski carpet Julia sets have Haudorff dimension strictly larger than one. Note that the carpets constructed in \S\ref{subsec:dim-1} have no self-similarity. However, Julia sets of rational maps have self-similarity in general. Indeed, the following result holds.

\begin{thm}\label{thm:att-para}
Let $f$ be a rational map having a Sierpi\'{n}ski carpet Julia set. Then the Haudorff dimension of $J(f)$ is strictly larger than one if any one of the following conditions holds:
\begin{enumerate}
\item $f$ contains an attracting basin;
\item $f$ contains a parabolic basin; or
\item $f$ is renormalizable and the small filled Julia set contains a sufficiently high type quadratic Siegel disk.
\end{enumerate}
\end{thm}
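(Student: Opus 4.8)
The plan is to reduce all three cases to a single geometric mechanism: exhibit a piece of the Julia set that carries a conformal iterated function system (a "repeller") whose Hausdorff dimension is strictly larger than one, and which sits inside $J(f)$. Since $J(f)$ is a Sierpi\'{n}ski carpet, it is in particular connected with infinitely many complementary Jordan domains, and the key point we want to extract from the carpet hypothesis is that $J(f)$ is \emph{not} contained in any real-analytic curve: if it were, it could not separate the plane into infinitely many Jordan domains in the required way. From this, I expect the dimension bound $\dim_H J(f)>1$ to follow once we know that $J(f)$ (or a relevant invariant subset) is the limit set of a conformal expanding repeller, because a conformal repeller has dimension one only if its limit set lies on a line or circle (this is a standard rigidity statement, e.g.\ via Bowen's formula: the pressure equation $P(-t\log|f'|)=0$ has solution $t=1$ only when $|f'|$ is, up to coboundary, the derivative of a M\"obius map on the limit set, forcing the set into a round circle).

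For case (a), the attracting basin produces a hyperbolic (expanding) subsystem in the usual way: localizing near a repelling periodic orbit in $\partial(\text{basin})$, or more robustly, using that $f$ restricted to $J(f)$ is expanding on the complement of a neighborhood of the (finite, bounded-orbit) critical set — here one should be a little careful, since $J(f)$ need not be a hyperbolic set globally, but one can pass to a forward-invariant Cantor subset of $J(f)$ built from a Markov partition away from the critical values, which is a conformal expanding repeller and still not contained in a circle (because it still separates some complementary domain boundaries). Then Bowen's formula plus the rigidity statement gives $\dim_H>1$ on that subset, hence on $J(f)$. Case (b) is handled identically after replacing the attracting basin's hyperbolicity by the parabolic analogue: $f$ is expanding on $J(f)$ minus a neighborhood of the parabolic points (and the critical set), one again extracts a conformal expanding repeller inside $J(f)$ that is not round, and applies Bowen's formula. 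For case (c), we use the renormalization hypothesis: if $(f^{\circ p},U,V)$ is quadratic-like and hybrid equivalent to $Q_c$ where $c$ corresponds to a Siegel disk of sufficiently high type, then by Theorem \ref{thm:hairy-cycle} the boundary of the Siegel disk is a Jordan curve whose post-critical set misses it, and — this is the crux — results of Buff–Ch\'{e}ritat / McMullen on the dimension of such Siegel-disk Julia sets give that the small Julia set $J(f^{\circ p})$ already has $\dim_H>1$ (high type forces the boundary regularity that makes the known dimension estimates apply), and $J(f^{\circ p})\subset J(f)$ since small Julia sets are contained in the Julia set.

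The main obstacle I anticipate is case (a)/(b): honestly producing, inside a carpet Julia set, a \emph{conformal expanding repeller} to which Bowen's formula applies, together with the argument that it is genuinely non-round. Expansion is standard once the critical orbits are controlled, but one must make sure the extracted subsystem still "sees" the carpet structure — i.e.\ that it is not accidentally a subset of a round circle. I would argue this by contradiction: if the maximal expanding subset of $J(f)$ were contained in a round circle $C$, then, propagating by the dynamics (the full Julia set is the closure of preimages of any such subset, up to the critical/parabolic exceptional set), $J(f)$ itself would have to lie in a finite union of analytic curves, contradicting that $J(f)$ bounds infinitely many disjoint Jordan domains with nonempty "fat" separation required of a Sierpi\'{n}ski carpet. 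The remaining steps — verifying the Markov/expanding structure, citing Bowen's formula and Ruelle's real-analyticity \cite{Ru82}, and the rigidity "dimension one $\Rightarrow$ round circle" for conformal repellers — are standard and I would only sketch them.
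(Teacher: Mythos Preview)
Your approach to (a) and (b) is in the right spirit --- the underlying mechanism is indeed the Przytycki/Zdunik/Urba\'{n}ski rigidity (a conformal repeller has dimension $>1$ unless it sits in a real-analytic curve) --- but the paper executes it differently and more directly. For (a) the paper simply cites Przytycki \cite{Pr06}, which already gives $\dim_H(\partial\Omega)>1$ for the boundary of an attracting basin. For (b) the paper does \emph{not} extract a hyperbolic sub-repeller away from the parabolic point as you propose; instead it proves a separate lemma (Lemma \ref{lema:renor-attr}) showing that, under the carpet hypothesis on $\partial\Omega$, some iterate $f^{\circ p}$ restricted to a neighborhood of $\overline{\Omega}$ is a genuine polynomial-like map with $\partial\Omega$ as its Julia set. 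This places $\partial\Omega$ squarely in the framework of Urba\'{n}ski \cite{Ur91}, whose dichotomy (real-analytic curve versus $\dim_H>1$) is then invoked; the analytic alternative is ruled out not via your ``carpet separates infinitely many domains'' argument but by observing that $\partial\Omega$ is quasiconformally equivalent to the cauliflower $J(Q_{1/4})$, which has cusps. Your sub-repeller route might be made to work, but the step ``propagate by dynamics to conclude $J(f)$ lies in finitely many analytic curves'' is delicate: you need the extracted repeller to carry the full curve $\partial\Omega$, not merely a Cantor subset of it, or else the rigidity conclusion does not transfer to anything the carpet structure can contradict.

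Case (c) has a genuine gap. Neither Buff--Ch\'{e}ritat nor McMullen gives what you need: McMullen's estimate \cite{Mc98} is for \emph{bounded type} rotation numbers and concerns $\partial\Delta$, not the small Julia set; Buff--Ch\'{e}ritat concerns area, not dimension $>1$. The paper's argument runs through a chain your sketch does not contain: the carpet hypothesis forces $\partial\Delta$ to be disjoint from every other Fatou component boundary, hence to avoid the critical point; by \cite{Ch17} and \cite{SY18} this means (for sufficiently high type) that the rotation number is \emph{not} of Herman type; and then \cite{CDY18} shows that for high type non-Herman $\alpha$ the post-critical set of $P_\alpha$ already has Hausdorff dimension two, whence $\dim_H J(f)=2$. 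The carpet hypothesis enters here in an essential way that your proposal does not capture.
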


In fact, for Theorem \ref{thm:att-para} we will prove that the Haudorff dimension of the boundary of the immediate attracting or parabolic basin is strictly larger than one.
Before giving the proof of Theorem \ref{thm:att-para}, we need the following result.

\begin{lema}\label{lema:renor-attr}
Let $\Omega$ be a fixed attracting or parabolic basin of a rational map $f$. If $\Omega$ is a Jordan domain and $\partial\Omega\cap\partial\Omega'=\emptyset$ for any other Fatou component $\Omega'$,  then there exist an integer $p\geq 1$ and two Jordan domains $U$, $V$ containing $\overline{\Omega}$ such that $(f^{\circ p},U,V)$ is a polynomial-like mapping of degree $d^p\geq 2$, where $d=\deg(f|_{\Omega})$.
\end{lema}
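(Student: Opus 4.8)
The plan is to build the domains $U$ and $V$ from the local dynamics of the attracting (or parabolic) fixed point together with the Fatou/Julia structure forced by the carpet hypothesis. Denote by $\zeta\in\overline\Omega$ the attracting or parabolic fixed point, and let $d=\deg(f|_\Omega)\geq 2$; the inequality $d\geq 2$ holds because $\Omega$ is a Jordan domain whose boundary meets no other Fatou component, so $\partial\Omega\subset J(f)$ carries no ``escaping through another component'' and the restriction $f|_{\partial\Omega}$ must be a covering of degree $\geq 2$ (a degree-one covering would force $\partial\Omega$ to be a circle carrying an irrational or parabolic rotation, which is incompatible with $\Omega$ being an attracting/parabolic basin with $\partial\Omega\subset J(f)$). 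First I would fix a linearizing or Fatou coordinate near $\zeta$ and pick a small round (or petal-shaped, in the parabolic case) Jordan neighborhood $W\Subset\Omega$ with $\overline{f(W)}\subset W$, so that $\overline\Omega=\bigcup_{n\geq 0}f^{-n}(\overline W)\cap\overline\Omega$ and $f\colon \Omega\setminus\overline{f(W)}\to\Omega\setminus W$ is a proper map of degree $d$ away from the orbit of the critical points of $f|_\Omega$.

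Next I would produce the Jordan domain $V$. The idea is to take a slightly enlarged copy of $\overline\Omega$: since $\partial\Omega$ is disjoint from $\overline{\Omega'}$ for every other Fatou component $\Omega'$ and from all of $P(f)$ except possibly the point $\zeta$ itself, the set $\overline\Omega$ has a Jordan neighborhood basis $\{V_\epsilon\}$ consisting of Jordan domains each of whose closure avoids $P(f)\setminus\{\zeta\}$ and avoids the critical points of $f$ not in $\overline\Omega$. Using the Shrinking Lemma (Lemma \ref{shrinking lemma}) applied to a collar of $\partial\Omega$ — which is legitimate precisely because that collar can be chosen with closure disjoint from $P(f)$ and not contained in a Siegel disk or Herman ring — I get that $f^{-n}$ of a thin collar of $\partial\Omega$ shrinks, so for a suitable $\epsilon$ the preimage component $U$ of $V:=V_\epsilon$ that contains $\overline\Omega$ is compactly contained in $V$. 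The integer $p\geq 1$ enters because in the parabolic case (and in the attracting case when the cycle of $\Omega$ has period $>1$, though here $\Omega$ is assumed fixed so $p$ is mainly a parabolic artifact) one must iterate $f$ a fixed number of times to make the local map a genuine attracting-type branch; one sets $p$ equal to the number of attracting petals (equivalently the parabolic multiplicity) so that $f^{\circ p}$ near $\zeta$ behaves like an attracting map on $\Omega$, and then repeats the collar/Shrinking-Lemma construction with $f^{\circ p}$ in place of $f$.

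Then I would check that $(f^{\circ p},U,V)$ is a polynomial-like mapping of degree $d^p$: properness is automatic because $U$ is a full preimage component of $V$ under the proper map $f^{\circ p}$ of $\widehat{\mathbb C}$; $U\Subset V$ was arranged; and $U,V$ are Jordan domains by construction, with $\overline\Omega\subset U\subset V$. The degree count $\deg(f^{\circ p}|_U)=d^p$ follows from the Riemann–Hurwitz / argument-principle bookkeeping: the only critical points of $f$ whose orbit can lie in $U$ are those on the boundary-free part of $\overline\Omega$ (the carpet hypothesis removes all others from a neighborhood of $\overline\Omega$), and these contribute exactly the branching that makes $f|_\Omega$ have degree $d$. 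Finally, the filled Julia set of this polynomial-like map contains $\overline\Omega$, hence is non-degenerate, which is the substantive output used later to force $\dim_H J(f)>1$.

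The main obstacle I anticipate is two-fold and both difficulties are geometric rather than algebraic. First, one must genuinely produce the Jordan domain $V\Supset\overline\Omega$ whose closure misses $P(f)\setminus\{\zeta\}$ and all ``foreign'' critical points — this is where the hypothesis ``$\partial\Omega\cap\partial\Omega'=\emptyset$ for all other Fatou components'' does the real work, since it is what keeps other post-critical accumulation away from $\partial\Omega$; making this precise requires a careful compactness argument on $\partial\Omega$ together with the fact that $P(f)\cap\Omega$ is contained in the finitely many critical orbits inside $\Omega$, all of which converge to $\zeta$. Second, arranging $U\Subset V$ (not merely $U\subset V$) is exactly a Shrinking-Lemma estimate on a collar of $\partial\Omega$, and one has to verify the collar's closure avoids $P(f)$ and is not swallowed by a rotation domain — again guaranteed by the carpet hypothesis. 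The parabolic case additionally needs the standard ``$p$ = number of petals'' normalization so that $f^{\circ p}$ genuinely attracts $\Omega$ into $W$; this is routine Leau–Fatou flower theory but must be stated. Everything else — properness, degree, Jordan-ness — is bookkeeping once $U\Subset V$ is in hand.
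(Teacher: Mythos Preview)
Your approach has a genuine gap, and it is exactly at the point you flag as ``routine''. In the parabolic case the fixed point $\zeta$ lies on $\partial\Omega$ and belongs to $P(f)$ (the critical orbit in $\Omega$ accumulates at $\zeta$), so \emph{no} collar of $\partial\Omega$ has closure disjoint from $P(f)$, and the Shrinking Lemma simply does not apply. Your fix --- take $p$ equal to the number of petals so that ``$f^{\circ p}$ behaves like an attracting map on $\Omega$'' --- is a misconception: iterating does not turn a parabolic fixed point into an attracting one; $\zeta$ remains parabolic for $f^{\circ p}$ with multiplier $1$, and the obstruction persists. Even in the attracting case your claim that a collar of $\partial\Omega$ can be made disjoint from $P(f)$ is unjustified: the hypothesis $\partial\Omega\cap\partial\Omega'=\emptyset$ concerns other \emph{Fatou components}, not the post-critical set, and nothing prevents a critical point in $J(f)$ from having its forward orbit land on or accumulate at $\partial\Omega$.

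The paper avoids these difficulties by an entirely different mechanism. It uniformizes $\widehat{\C}\setminus\Omega$ conformally to $\widehat{\C}\setminus\D$, conjugates $f$ to a map $g$ defined on a one-sided neighborhood of $\partial\D$, and then extends $g$ across $\partial\D$ by Schwarz reflection to a holomorphic map $h$. The carpet hypothesis (no other $\partial\Omega'$ touches $\partial\Omega$) is used to show that $h$ has no non-repelling periodic points on $\partial\D$; then Ma\~n\'e's theorem gives that $\partial\D$ is a hyperbolic set for $h$. Hyperbolicity yields, for some iterate $h^{\circ p}$, a pair of one-sided annuli $A_\varepsilon'\Subset A_\varepsilon$ with $h^{\circ p}:A_\varepsilon'\to A_\varepsilon$ proper of degree $d^p$; pulling back and gluing with $\overline\Omega$ gives $U\Subset V$. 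The point is that expansion along $\partial\Omega$ is obtained from Ma\~n\'e's theorem for the \emph{reflected} dynamics, not from a Shrinking-Lemma estimate that would require $P(f)$ to stay away from $\partial\Omega$.
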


\begin{proof}
Since $\Omega$ is a Jordan domain, there exists a continuous map $\phi:\EC\setminus\Omega\to \EC\setminus\D$ such that $\phi:\EC\setminus\overline{\Omega}\to \EC\setminus\overline{\D}$ is conformal, where $\D$ is the unit disk. Then we obtain a continuous map
\begin{equation*}
g:=\phi\circ f\circ\phi^{-1}:\phi(\EC\setminus f^{-1}(\Omega))\to\EC\setminus\D,
\end{equation*}
such that $g$ is analytic in $\phi(\EC\setminus f^{-1}(\overline{\Omega}))\subset\EC\setminus\overline{\D}$ and $g(\partial\D)=\partial\D$. Since $\partial\Omega\cap\partial\Omega'=\emptyset$ for any other Fatou component $\Omega'$ of $f$, it follows that $\partial\Omega$ does not contain any critical points of $f$, $\phi(\EC\setminus f^{-1}(\overline{\Omega}))$ contains an annulus
\begin{equation*}
A_\delta:=\{z\in\C:1<|z|<1+\delta\} \text{ for some } \delta>0,
\end{equation*}
and $g:\partial\D\to\partial\D$ is a covering map of degree $d$.

By Schwarz reflection principle, $g:\phi(\EC\setminus f^{-1}(\overline{\Omega}))\to\EC\setminus\overline{\D}$ can be extended to a holomorphic map $h:W\to\EC$, where $W$ contains an open neighborhood of $\partial\D$. It is easy to see that $h$ cannot contain any irrational indifferent periodic point on $\partial\D$ since $h(\partial\D)=\partial\D$. Suppose that $h$ has an attracting or parabolic periodic point $z_0\in\partial\D$ with period $l\geq 1$. Let $B$ be the immediate attracting or parabolic basin of $z_0$ in $W$. Then
\begin{equation*}
f^{\circ l}(\phi^{-1}(B\cap(\EC\setminus\overline{\D})))\subset\phi^{-1}(B\cap(\EC\setminus\overline{\D}))
\end{equation*}
and hence $\phi^{-1}(B\cap(\EC\setminus\overline{\D}))$ is contained in a Fatou component $B'$ of $f$ in $\EC\setminus\overline{\Omega}$. Moreover, we have $\phi^{-1}(z_0)\in\partial B'\cap\partial\Omega$, which contradicts the assumption that $\partial\Omega\cap\partial\Omega'=\emptyset$ for any Fatou component $\Omega'\neq \Omega$. Hence, all the periodic points of $h$ on $\partial\D$ are repelling.

By \cite[Theorem A]{Ma85} (see also \cite{Ma87}), it follows that $\partial\D$ is a hyperbolic set, i.e., there exist constants $K>0$ and $\lambda>1$ such that $|(h^{\circ n})'(z)|\geq K\lambda^n$ for all $z\in\partial\D$ and all $n\in\N$. Then there exist an integer $p\geq 1$ and an annulus $A_\varepsilon$ with $\varepsilon>0$ such that $\overline{A}_\varepsilon$ is compactly contained in $h^{\circ p}(A_\varepsilon)$ and $\overline{A}_\varepsilon$ does not contain any critical points of $h$. Let $A_\varepsilon'$ be the connected component of $(h^{\circ p})^{-1}(A_\varepsilon)$ whose boundary contains the unit circle. Then $A_\varepsilon'$ is an annulus which is compactly contained in $A_\varepsilon$ and we obtain a proper holomorphic surjection $h^{\circ p}:A_\varepsilon'\to A_\varepsilon$ of degree $d^p$. Define $U:=\overline{\Omega}\cup\phi^{-1}(A_\varepsilon')$ and $V:=\overline{\Omega}\cup\phi^{-1}(A_\varepsilon)$. Then $(f^{\circ p},U,V)$ is a polynomial-like mapping of degree $d^p$.
\end{proof}

\begin{proof}[{Proof of Theorem \ref{thm:att-para}}]
(a) If $\Omega$ is an attracting basin, then we have $\dim_H(\partial\Omega)>1$ by \cite[Theorem A]{Pr06}.

\vskip0.1cm
(b) Without loss of generality, we assume that $\Omega$ is a fixed parabolic basin of $f$ whose boundary contains a parabolic fixed point $z_0$. Since $J(f)$ is a Sierpi\'{n}ski carpet, it follows that $f'(z_0)=1$ and $\partial \Omega$ is a Jordan curve which is disjoint with the boundaries of any other Fatou components. By Lemma \ref{lema:renor-attr}, there exist an integer $p\geq 1$ and two Jordan domains $U$ and $V$ containing $\overline{\Omega}$ such that $(f^{\circ p},U,V)$ is a polynomial-like mapping. Therefore, we have
\begin{equation*}
f^{\circ p}(U\cap\Omega)\subset\Omega \text{\quad and\quad} f^{\circ p}(U\cap(\EC\setminus\overline{\Omega}))=V\setminus\overline{\Omega}\subset\EC\setminus\overline{\Omega}.
\end{equation*}
This means that $f^{\circ p}$ satisfies condition (f) in \cite[p.\,168]{Ur91}. Since $z_0$ is the unique parabolic fixed point of $f^{\circ p}$ on the Jordan curve $\partial\Omega$, it is easy to see that $f^{\circ p}$ satisfies the conditions (a)-(e) in \cite[p.\,167]{Ur91}. By \cite[Theorem 5.2]{Ur91}, $\partial\Omega$ is either a real-analytic curve or $\dim_H(\partial\Omega)>1$. The former cannot happen since $\partial\Omega$ is quasiconformally homeomorphic to the Julia set of $Q_{1/4}(z)=z^2+1/4$, which is the ``cauliflower" containing infinitely many cusps. Hence $\dim_H(\partial\Omega)>1$. Therefore, in this case we have $\dim_H(J(f))=\dim_H(J(f^{\circ p}))\geq \dim_H(\partial\Omega)>1$.

\vskip0.1cm
(c) Recall that $\HT_N$ is the high type numbers defined in \eqref{equ:high-type}. Since $J(f)$ is a Sierpi\'{n}ski carpet, the boundaries of all the Fatou components are pairwise disjoint. In particular, the boundary of the renormalized high type quadratic Siegel disk $\Delta$ does not contain any critical point. According to \cite{Ch17} and \cite{SY18}, this means that the rotation number $\alpha$ of $\Delta$ does not belong to the Herman type $\mathscr{H}$ provided $\alpha$ is of sufficiently high type. By \cite{CDY18}, there exists a number $N_1\geq 1$ such that for all $\alpha\in\HT_{N_1}\setminus\mathscr{H}$, the post-critical set of $P_\alpha(z)=e^{2\pi\ii\alpha}z+z^2$ has Hausdorff dimension two. This means that $\dim_H(J(f))=2$.
\end{proof}

If the Julia set of rational map $f$ is a Sierpi\'{n}ski carpet and all the periodic Fatou components of $f$ are Siegel disks, then the situation is complicated since we don't have a good control on the post-critical set in general. One cannot expect to obtain the similar result as Theorem \ref{thm:att-para}. On the one hand, the boundary of the Siegel disk $\Delta$ of $f$ could be smooth (see \cite{ABC04}, \cite{BC07}) and $\dim_H(\partial\Delta)=1$. On the other hand, the map $f$ may be non-renormalizable and we cannot use the result of the Hausdorff dimension of the Julia sets of polynomials and polynomial-like mappings (see \cite{Zd90}, \cite{UZ02}). For example, $J(f)$ might be the mating of two quadratic Siegel disks whose boundaries containing no critical points. For the estimation of the Hausdorff dimension of the boundaries of Siegel disks, one may refer to \cite{Mc98} and \cite{GJ02}. In particular, if $\alpha$ is of bounded type, then $1<\dim_H(\Delta_\alpha)<2$, where $\Delta_\alpha$ is the Siegel disk of $P_\alpha(z)=e^{2\pi\ii\alpha}z+z^2$.

\vskip0.1cm
Anyway, Theorem \ref{thm:att-para} gives an evidence that all Sierpi\'{n}ski carpet Julia sets should have Hausdorff dimension strictly larger than one. In particular, if a Sierpi\'{n}ski carpet contains the boundary of a Siegel disk, then the whole Julia set probably has full Hausdorff dimension.

\end{document}